\newcommand{\impli}{\Rightarrow}
\newcommand{\Nat}{\mathbb{N}}
\newcommand{\erre}{\mathbb{R}}
\newcommand{\sub}{\subseteq}
\def\epsilon{\varepsilon}
\newtheorem{theorem}{Theorem}[section]
\newtheorem{corollary}[theorem]{Corollary}
\newtheorem{lemma}[theorem]{Lemma}
\theoremstyle{definition}
\newtheorem{definition}[theorem]{Definition}
\newtheorem{example}[theorem]{Example}
\newtheorem{remark}[theorem]{Remark}
\numberwithin{equation}{section}
\title{On $p$-Dunford integrable functions with values in Banach spaces}
\author{J.M. Calabuig}
\address{Instituto Universitario de Matem\'{a}tica Pura y Aplicada\\Universitat Polit\`{e}cnica de Val\`{e}ncia\\
Camino de Vera s/n\\ 46022 Valencia\\ Spain} \email{jmcalabu@mat.upv.es}
\author{J. Rodr\'{i}guez}
\address{Dpto. de Ingenier\'{i}a y Tecnolog\'{i}a de Computadores\\Facultad de Inform\'{a}tica\\
Universidad de Murcia\\ 30100 Espinardo (Murcia)\\ Spain} \email{joserr@um.es}
\author{P. Rueda}
\address{Dpto. de An\'{a}lisis Matem\'{a}tico \\ Facultad de Matem\'{a}ticas \\ Universidad de Val{e}ncia\\
Avda. Doctor Moliner 50\\ 46100 Burjassot (Valencia)\\ Spain} \email{pilar.rueda@uv.es}
\author{E.A. S\'{a}nchez-P\'{e}rez}
\address{Instituto Universitario de Matem\'{a}tica Pura y Aplicada\\ Universitat Polit\`{e}cnica de Val\`{e}ncia\\
Camino de Vera s/n\\ 46022 Valencia\\ Spain} \email{easancpe@mat.upv.es}
\subjclass[2010]{28B05, 46G10}
\keywords{$p$-Dunford integrable function, $p$-Pettis integrable function, Dunford operator, $p$-summing operator, $w^*$-thick set}
\thanks{Research partially supported by {\em Ministerio de Econom\'{i}a y Competitividad} and {\em FEDER} 
under projects MTM2014-53009-P (J.M. Calabuig), MTM2014-54182-P (J. Rodr\'{i}guez) and MTM2016-77054-C2-1-P (P. Rueda and E.A. S\'{a}nchez-P\'{e}rez). 
The second author was also partially supported by project 19275/PI/14 funded by {\em Fundaci\'on S\'eneca -- Agencia de Ciencia y Tecnolog\'\i a 
de la Regi\'on de Murcia} within the framework of PCTIRM 2011-2014. }
\begin{document}

\begin{abstract}
Let  $(\Omega,\Sigma,\mu)$ be a complete probability space, $X$ a Banach space and $1\leq p<\infty$. 
In this paper we discuss several aspects of $p$-Dunford integrable functions $f:\Omega \to X$. 
Special attention is paid to the compactness of the Dunford operator of~$f$. We also study 
the $p$-Bochner integrability of the composition $u\circ f:\Omega \to Y$, where $u$ is a $p$-summing operator from~$X$ to another Banach space~$Y$. 
Finally, we also provide some tests of $p$-Dunford integrability by using $w^*$-thick subsets of~$X^*$.
\end{abstract}

\maketitle

\section{Introduction}

Throughout this paper $(\Omega, \Sigma, \mu)$ is a complete probability space, $X$ is a Banach space and $1\leq p<\infty$.
Dunford and Pettis integrable functions $f:\Omega \to X$ have been widely studied over the years and their properties
are nowadays well understood, see e.g. \cite{Musial,mus3,tal}. However, it seems that this is not the case
for $p$-Dunford and $p$-Pettis integrable functions when $p>1$. This paper aims to contribute to fill in this gap. The following 
definition goes back to Pettis~\cite{Pettis}. 

\begin{definition}\label{definition:pPettis}
A function $f:\Omega \to X$ is called: 
\begin{enumerate}
\item[(i)] {\it $p$-Dunford integrable} if $\langle f,x^*\rangle\in L^p(\mu)$ for every $x^*\in X^*$;
\item[(ii)] {\it $p$-Pettis integrable} if it is $p$-Dunford integrable and Pettis integrable.
\end{enumerate}
\end{definition}
 
As usual, for any $f:\Omega \to X$ and $x^*\in X^*$, the scalar function $\langle f,x^*\rangle:\Omega\to \mathbb R$ 
is defined by $\langle f,x^*\rangle(\omega):=\langle f(\omega),x^*\rangle$ for all $\omega\in \Omega$.

Some scattered results about $p$-Dunford and $p$-Pettis integrable functions can be found in \cite{dia-alt,dre-alt,fre-alt-1,fre-alt-2,fre-alt-3,Pettis}.
Most of them are restricted to the case of {\em strongly measurable} functions or, equivalently,
{\em separable} Banach spaces. For instance, if $p>1$, 
then every strongly measurable $p$-Dunford integrable function $f:\Omega \to X$ is $p$-Pettis integrable, see~\cite[Corollary~5.31]{Pettis}
(cf. \cite[Corollary~5.2]{Musial}). In this paper we deal with $p$-Dunford and $p$-Pettis
integrable functions which are not necessarily strongly measurable. 

Let us summarize the content of this work. 

In Section~\ref{section:preliminaries} we introduce the basic terminology and 
include some preliminaries on $p$-Dunford and $p$-Pettis integrable functions.

In Section~\ref{section:compact} we discuss the compactness
of the {\em Dunford operator} 
$$
	T_f^p: L^{p'}(\mu) \to X^{**}, 
$$ 
associated to a $p$-Dunford integrable function $f:\Omega \to X$. As usual,
$1<p'\leq \infty$ denotes the conjugate exponent of~$p$, i.e. $\frac 1p+\frac 1{p'}=1$. The operator $T_f^p$ is defined
as the adjoint of the operator 
\begin{equation}\label{eqn:Sf}
	S_f^p:X^* \to L^p(\mu), \quad
	S_f^p(x^*):=\langle f,x^*\rangle.
\end{equation}
Compactness of the Dunford operator is important for applications and  
some authors add it to the definition of $p$-Pettis integrable function, see~\cite{fre-alt-1}.
If $f$ is $p$-Pettis integrable, then $T_f^p$ takes values in~$X$. 
In this case, the compactness of $T_f^p$ is equivalent
to the approximation of~$f$ by simple functions in the $p$-Pettis norm (Theorem~\ref{theorem:compactVSapprox}). 
An example of Pettis~\cite{Pettis} (cf. Example~\ref{example:Pettis}) already showed that $T_f^p$ might not be compact for $p>1$
even for a strongly measurable $p$-Pettis integrable function~$f$. In the case $p=1$, the counterexamples for Pettis integrable functions 
involve necessarily non strongly measurable functions and are far from elementary, the first one being
constructed by Fremlin and Talagrand~\cite{fre-tal}. We show that for a $p$-Dunford integrable function $f$,
the operator $T_f^p$ is compact if and only if (i) $T_f^1: L^\infty(\mu) \to X^{**}$ is compact, and (ii) the family of real-valued functions
$$
	Z_f^p:=\big\{|\langle f,x^*\rangle|^p: \, x^*\in B_{X^*}\big\},
$$
is a uniformly integrable subset of~$L^1(\mu)$ (Theorem~\ref{theorem:CompactDunfordOperator}). 
Condition (i) follows automatically from~(ii) whenever $f$ is strongly measurable, but also in many other cases,
e.g. if $\mu$ is perfect or if $X\not\supseteq \ell^1(\aleph_1)$, where $\aleph_1$ denotes the first
uncountable cardinal (see Corollary~\ref{cor:DCP}).

In Section~\ref{section:summing} we study the integrability of the composition 
$u\circ f:\Omega \to Y$, where $u$ is a {\em $p$-summing} operator from~$X$ to another Banach space~$Y$
and $f:\Omega \to X$ is a $p$-Dunford integrable function. For $p=1$, Diestel~\cite{die2} proved that
$u\circ f$ is Bochner integrable whenever $f$ is strongly measurable and Pettis integrable. 
As remarked in \cite[p.~56]{die-alt}, his argument can be easily modified for arbitrary $1\leq p<\infty$ to obtain that 
$u\circ f$ is $p$-Bochner integrable whenever $f$ is strongly measurable and $p$-Dunford integrable.
Several papers discussed such type of questions for $p=1$ beyond the strongly measurable case,
see \cite{bel-dow,dev-rod,lew3,rod3,rod10}. An unpublished result of Lewis~\cite{lew3}, rediscovered independently in~\cite[Theorem~2.3]{rod3}, states that 
for $p=1$ the composition $u\circ f$ is at least {\em scalarly equivalent} to a Bochner integrable function if $f$ is Dunford integrable.
Here we generalize this result for the range $1\leq p<\infty$ (Theorem~\ref{theorem:pSumming-scalar}) and provide many
examples of Banach spaces $X$ for which $u\circ f$ is actually $p$-Bochner integrable 
if $f$ is $p$-Dunford integrable (Corollary~\ref{corollary:pSumming-strong}). To this end we need some auxiliary results
on the $p$-variation and $p$-semivariation of a vector measure which might be of independent interest.

Finally, in Section~\ref{section:thick} we give some criteria to check the $p$-Dunford integrability
of a function $f:\Omega \to X$ by looking at the family of real-valued functions
$$
	Z_{f,\Gamma}:=\{\langle f,x^*\rangle: \, x^*\in \Gamma\},
$$
for some set $\Gamma \sub X^*$. Fonf proved that $f$ is Dunford integrable
if $X$ is separable, $X\not\supseteq c_0$ and $\langle f,x^* \rangle\in L^1(\mu)$
for every extreme point $x^*$~of~$B_{X^*}$ (see~\cite[Theorem~4]{fon2-J}). This result is based on the striking fact
that if $X\not \supseteq c_0$, then the set of all extreme points of~$B_{X^*}$ 
(or, more generally, any James boundary of~$B_{X^*}$) is $w^*$-thick, see \cite[Theorem~1]{fon2-J}
(cf. \cite[Theorem~2.3]{nyg}).
A set $\Gamma \sub X^*$ is said to be {\em $w^*$-thick} if whenever $\Gamma=\bigcup_{n\in \Nat} \Gamma_n$ for some 
increasing sequence $(\Gamma_n)$ of sets, 
there is $n\in \Nat$ such that $\overline{{\rm aco}}^{w^*}(\Gamma_n)$ contains a ball centered at~$0$.
This concept is useful to check several properties without testing on the whole dual,
see \cite{nyg} for more information. In \cite{abr-alt} it was pointed out that
if $X$ is separable, then a function $f:\Omega \to X$ is Dunford integrable whenever $Z_{f,\Gamma} \sub L^1(\mu)$
for some $w^*$-thick set $\Gamma \sub X^*$. As an application of our main theorem
of this section (Theorem~\ref{theorem:pDunford-thick}), we extend the result of \cite{abr-alt} to the range $1\leq p<\infty$
and a wide class of Banach spaces, namely, those having Efremov's property~($\mathcal{E}$)
(Corollary~\ref{corollary:pDunford-thick}). This also complements similar results in~\cite{raj-rod} dealing
with scalarly bounded functions.

\section{Preliminaries}\label{section:preliminaries}

We follow standard Banach space terminology as it can be found in \cite{die-uhl-J} and \cite{fab-ultimo}.
All our Banach spaces are real. An {\em operator} between Banach spaces is a  continuous linear map. 
Given a Banach space~$Z$, its norm is denoted by $\|\cdot\|_Z$ or simply
$\|\cdot\|$ if no confussion arises. We write $B_Z=\{z\in Z:\|z\|\leq 1\}$ (the closed unit ball of~$Z$) and
$S_Z=\{z\in Z:\|z\|= 1\}$ (the unit sphere of~$Z$). The topological dual of~$Z$
is denoted by~$Z^*$. The weak topology on~$Z$ and the weak$^*$ topology on~$Z^*$ 
are denoted by $w$ and~$w^*$, respectively. The evaluation of $z^*\in Z^*$ at $z\in Z$ is denoted by either
$\langle z,z^*\rangle$ or $\langle z^*,z\rangle$. A {\em subspace} of~$Z$ is a closed linear subspace.
Given another Banach space~$Y$, we write $Z\not\supseteq Y$ if $Z$ contains no subspace isomorphic to~$Y$. 
The absolutely convex hull of a set $S \sub Z$ is denoted by ${\rm aco}(S)$.

The characteristic function of $A\in \Sigma$ is denoted by $\chi_A$.
A set $H\sub L^1(\mu)$ is called {\em uniformly integrable} if it is bounded
and for every $\epsilon>0$ there is $\delta>0$ such that $\sup_{h\in H}\int_A |h| \, d\mu\leq \epsilon$
for every $A\in \Sigma$ with $\mu(A)\leq\delta$. This is equivalent to saying that $H$ is relatively weakly compact in~$L^1(\mu)$,
see e.g. \cite[p.~76, Theorem~15]{die-uhl-J}. By using H{\"o}lder's inequality it is easy to check, for $p>1$, that
any bounded subset of $L^p(\mu)$ is uniformly integrable as a subset of~$L^1(\mu)$.

A function $f:\Omega \to X$ is called:
\begin{itemize}
\item {\em simple} if it can be written as $f=\sum_{i=1}^nx_i\chi_{A_i}$, where $n\in \Nat$, $x_i\in X$ and $A_i\in \Sigma$
for every $i=1,\dots,n$;
\item {\em scalarly bounded} if there is a constant $M>0$ such that, for each $x^*\in X^*$, we have
$|\langle f,x^*\rangle|\leq M\|x^*\|$ $\mu\mbox{-a.e.}$ (the exceptional set depending on~$x^*$); 
\item {\em scalarly measurable} if $\langle f,x^*\rangle$ is measurable for every $x^*\in X^*$;
\item {\it strongly measurable} if there is a sequence of simple functions $f_n:\Omega\to X$ such that
$f_n(\omega)\to f(\omega)$ in norm for $\mu$-a.e. $\omega \in \Omega$.
\end{itemize}

The celebrated Pettis' measurability theorem (see e.g. \cite[p.~42, Theorem~2]{die-uhl-J}) states that $f$ is strongly measurable if and only
if it is scalarly measurable and there is $A\in \Sigma$ with $\mu(\Omega \setminus A)=0$ such that $f(A)$ is separable.

Two functions $f,g:\Omega \to X$ are said to be {\em scalarly equivalent}
if for each $x^*\in X^*$ we have $\langle f,x^*\rangle=\langle g,x^*\rangle$ $\mu$-a.e.
(the exceptional set depending on~$x^*$).

Given any Dunford (i.e. $1$-Dunford) integrable function $f:\Omega \to X$, there is a finitely additive measure $\nu_f:\Sigma \to X^{**}$ satisfying
$$
	\langle \nu_f(A),x^*\rangle=\int_A \langle f,x^*\rangle \, d\mu\quad
	\mbox{for all }A\in \Sigma \mbox{ and } x^*\in X^*. 
$$
As usual, we also write $\int_A f \, d\mu:=\nu_f(A)$. Recall that $f$ is said to be {\em Pettis integrable} if $\int_A f \, d\mu\in X$
for all $A\in \Sigma$. 

\begin{remark}\label{remark:basic-pDunford}
Let $f:\Omega \to X$ be a {\em $p$-Dunford integrable} function. Then:
\begin{enumerate}
\item[(i)] $f$ is Dunford integrable.
\item[(ii)] A standard closed graph argument shows that
$S_f^p$ (defined in~\eqref{eqn:Sf}) is an operator (see e.g. \cite[p.~52, Lemma~1]{die-uhl-J} for a proof of the case $p=1$), hence
$$
	\|f\|_{\mathscr{D}_p(\mu,X)}:=\sup_{x^*\in B_{X^*}}
	\left(
	\int_\Omega 
	|\langle f,x^*\rangle|^p \, d\mu
	\right)^{1/p}<\infty.
$$
In particular, the family of real-valued functions
$$ 
	Z_f:=\big\{\langle f,x^*\rangle: \, x^*\in B_{X^*}\big\},
$$
is uniformly integrable in~$L^1(\mu)$ whenever $p>1$. 
\item[(iii)] For each $g\in L^{p'}(\mu)$ the product $gf:\Omega \to X$
is Dunford integrable and 
$$
	T^p_f(g)=\int_\Omega g f  \, d\mu.
$$
In particular, $T^p_f(\chi_A)=\int_A f\, d\mu$ for all $A\in \Sigma$.
\item[(iv)] If $f$ is {\em $p$-Pettis integrable}, then
$T^p_f$ takes values in~$X$ and $gf$ is Pettis integrable for every $g\in L^{p'}(\mu)$.
Moreover, in this case $Z_f$ is uniformly integrable in $L^1(\mu)$ even for $p=1$
(see e.g. \cite[Corollary~4.1]{Musial}).
\item[(v)] By Schauder's theorem, the compactness of~$T^p_f$ is equivalent to that of~$S^p_f$.
\end{enumerate}
\end{remark}

In general, scalarly measurable bounded functions might not be Pettis integrable. This is an 
interesting phenomenon ocurring in the Pettis integral theory of non strongly measurable functions.
The space $X$ is said to have the {\em Pettis Integral Property} 
with respect to~$\mu$ (shortly $\mu$-PIP) if every scalarly bounded and scalarly measurable function $f:\Omega \to X$ is Pettis integrable. 
The $\mu$-PIP is equivalent to the following (apparently stronger) condition: a function 
$f:\Omega \to X$ is Pettis integrable if (and only if) it is Dunford integrable and $Z_f$ is uniformly integrable in $L^1(\mu)$.
The space $X$ is said to have the {\em Pettis Integral Property} (PIP) if it has the $\mu$-PIP
for any complete probability space $(\Omega,\Sigma,\mu)$. The class of Banach spaces having the PIP is rather wide and 
includes, for instance, all spaces having Corson's property~(C), all spaces having Mazur's property and all spaces which
are weakly measure-compact. In particular,
every weakly compactly generated space has the PIP. We refer the reader to \cite[Chapter~7]{Musial} and \cite[Section~8]{mus3}
for more information on the PIP. The following connection is immediate:

\begin{corollary}\label{corollary:PIP}
The following statements are equivalent:
\begin{enumerate}
\item[(i)] $X$ has the $\mu$-PIP; 
\item[(ii)] for some/any $1<p<\infty$, every $p$-Dunford integrable function $f:\Omega \to X$ is $p$-Pettis integrable.
\end{enumerate}
\end{corollary}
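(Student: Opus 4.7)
The plan is to exploit directly the equivalent reformulation of $\mu$-PIP stated just before the corollary, namely that $X$ has $\mu$-PIP if and only if every Dunford integrable $f:\Omega\to X$ with $Z_f$ uniformly integrable in $L^1(\mu)$ is Pettis integrable. Given this, the proof essentially reduces to a two-line check in each direction, so the entire argument should just be a careful assembly of facts already recorded in Remark~\ref{remark:basic-pDunford} and the preliminaries.

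For (i)$\Rightarrow$(ii), fix an arbitrary $p\in(1,\infty)$ and let $f:\Omega\to X$ be $p$-Dunford integrable. By Remark~\ref{remark:basic-pDunford}(i), $f$ is Dunford integrable. By Remark~\ref{remark:basic-pDunford}(ii), $Z_f$ is a bounded subset of $L^p(\mu)$, and hence, as observed in Section~\ref{section:preliminaries} via H{\"o}lder's inequality (here is where $p>1$ is used), $Z_f$ is uniformly integrable in $L^1(\mu)$. The $\mu$-PIP assumption, in its equivalent form, then forces $f$ to be Pettis integrable; combined with $p$-Dunford integrability this means $f$ is $p$-Pettis integrable.

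For (ii)$\Rightarrow$(i), suppose the implication in (ii) holds for some fixed $p\in(1,\infty)$, and let $f:\Omega\to X$ be scalarly bounded and scalarly measurable. Scalar boundedness and scalar measurability give $\langle f,x^*\rangle\in L^\infty(\mu)\subseteq L^p(\mu)$ for every $x^*\in X^*$, so $f$ is $p$-Dunford integrable. By hypothesis, $f$ is $p$-Pettis integrable, in particular Pettis integrable. Thus $X$ has the $\mu$-PIP.

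Since (i)$\Rightarrow$(ii) was established for \emph{any} $p\in(1,\infty)$ while (ii)$\Rightarrow$(i) only required (ii) for \emph{some} $p\in(1,\infty)$, the ``some/any'' formulation is automatic. There is no real obstacle here; the only point worth being careful about is invoking the equivalent reformulation of $\mu$-PIP (a function is Pettis integrable iff it is Dunford integrable with $Z_f$ uniformly integrable), rather than the original definition in terms of scalarly bounded functions, so that one can pass directly from ``$Z_f$ uniformly integrable + Dunford'' to ``Pettis integrable'' without an extra truncation step.
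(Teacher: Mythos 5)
Your proof is correct and follows essentially the same route as the paper: the forward direction uses Remark~\ref{remark:basic-pDunford}(ii) together with the equivalent (uniform-integrability) reformulation of the $\mu$-PIP, and the reverse direction rests on the observation that scalarly bounded, scalarly measurable functions are $p$-Dunford integrable for every $p$. The handling of the ``some/any'' quantifier is also as in the paper.
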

\begin{proof}
(ii)$\impli$(i): Note that any scalarly bounded and scalarly measurable function $f:\Omega \to X$
is $p$-Dunford integrable, for any $1\leq p<\infty$.

(i)$\impli$(ii): Let $f:\Omega \to X$ be a $p$-Dunford integrable function for some $1<p<\infty$. According
to Remark~\ref{remark:basic-pDunford}(ii), $Z_f$ is uniformly integrable in~$L^1(\mu)$ and so
the $\mu$-PIP of~$X$ ensures that $f$ is Pettis integrable.
\end{proof}

For any strongly measurable function $f:\Omega \to X$ there is 
a separable subspace $Y \sub X$ such that $f(\omega)\in Y$ for $\mu$-a.e. $\omega \in \Omega$.
Since separable Banach spaces have the PIP, from Corollary~\ref{corollary:PIP} we get the classical result mentioned in the introduction:

\begin{corollary}[Pettis]\label{corollary:DunfordImpliesPettis}
Suppose $1<p<\infty$. Then every strongly measurable $p$-Dunford integrable function $f:\Omega \to X$ is $p$-Pettis integrable.
\end{corollary}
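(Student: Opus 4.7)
The plan is to reduce the problem to the separable case via Pettis' measurability theorem and then invoke Corollary~\ref{corollary:PIP}.

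First, since $f$ is strongly measurable, Pettis' measurability theorem (quoted in the excerpt) furnishes a separable subspace $Y \sub X$ together with a set $A \in \Sigma$ satisfying $\mu(\Omega \setminus A) = 0$ and $f(A) \sub Y$. Multiplying $f$ by $\chi_A$ if necessary, I may assume without loss of generality that $f(\Omega) \sub Y$; this modification on a $\mu$-null set preserves $p$-Dunford integrability, scalar equivalence, and Pettis integrability, so it is harmless.

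Second, I regard $f$ as a map into $Y$. By the Hahn--Banach theorem every $y^* \in Y^*$ extends to some $x^* \in X^*$, and since $f$ takes values in $Y$ one has $\langle f, y^* \rangle = \langle f, x^* \rangle$ pointwise, so $\langle f, y^* \rangle \in L^p(\mu)$ for every $y^* \in Y^*$. Thus $f:\Omega \to Y$ is $p$-Dunford integrable. Since $Y$ is separable, it has the PIP (as noted just before the statement). Applying Corollary~\ref{corollary:PIP} with $Y$ in place of~$X$, I conclude that $f:\Omega \to Y$ is $p$-Pettis integrable. Finally, because $Y$ embeds isometrically into $X$ and every $x^* \in X^*$ restricts to an element of $Y^*$, the Pettis integrals of $f$ computed in~$Y$ witness the Pettis integrability of $f$ as an $X$-valued function, so $f:\Omega \to X$ is $p$-Pettis integrable.

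No substantial obstacle is anticipated: once the Pettis measurability theorem delivers the essentially separable range, the result is a direct consequence of Corollary~\ref{corollary:PIP} together with the PIP of separable Banach spaces.
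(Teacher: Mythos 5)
Your proof is correct and follows essentially the same route as the paper: restrict to the separable subspace furnished by Pettis' measurability theorem, invoke the PIP of separable spaces, and apply Corollary~\ref{corollary:PIP}. The paper states this in one line, while you spell out the (correct) Hahn--Banach transfer of $p$-Dunford integrability to the subspace and back; no discrepancy.
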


\section{Compactness of the Dunford operator}\label{section:compact}

We first revisit, with an easier proof, Pettis' example (see \cite[9.3]{Pettis}) of a strongly measurable $2$-Pettis integrable
function having non-compact Dunford operator.

\begin{example}[Pettis]\label{example:Pettis}
Let $(f_n)$ be an orthonormal system in $L^2[0,1]$ and let us consider the strongly measurable function
$$
	f:[0,1]\to L^2[0,1], \quad f(t):=\sum_{n=1}^\infty 2^n f_n \cdot \chi_{I_n}(t),
$$
where $I_n:=(1/2^n,1/2^n+1/4^n)$ for all $n\in \Nat$ (so that the $I_n$'s are pairwise
disjoint). For each $g\in L^2[0,1]^*=L^2[0,1]$ we have
$$
	\Big(\int_0^1 |\langle f,g\rangle(t)|^2 \, dt\Big)^{1/2}=
	\Big(\sum_{n=1}^\infty |\langle f_n,g \rangle|^2\Big)^{1/2} \leq \|g\|_{L^2[0,1]},
$$
and so $\langle f,g\rangle\in L^2[0,1]$. Thus, $f$ is $2$-Dunford integrable and hence 
$2$-Pettis integrable (apply Corollary~\ref{corollary:DunfordImpliesPettis}). 
Let us check that the operator $S^2_f:L^2[0,1]\to L^2[0,1]$ is not compact. Indeed, observe that
$S^2_f(f_n)=\langle f,f_n\rangle=2^n\chi_{I_n}$ for all $n\in \Nat$.
Since $(2^n\chi_{I_n})$ is an orthonormal system in $L^2[0,1]$, it does not admit any norm
convergent subsequence. Therefore, $S^2_f$ is not compact.
\end{example}

The previous construction can be generalized, as we show in Example~\ref{example:C} below. By a {\em K{\"o}the function space} over~$\mu$ we mean an order 
ideal $Z$ of~$L^1(\mu)$ containing all simple functions which is equipped with a complete lattice norm. The space $Z$
is said to be {\em $q$-convex}, for a given $1\leq q<\infty$, if there is a constant $C>0$ such that
$$
	\Big\|  \Big( \sum_{i=1}^n  |z_i|^q \Big)^{1/q} \Big\|_{Z} \leq
	C \Big( \sum_{i=1}^n  \|z_i\|^q_{Z} \Big)^{1/q},
$$
for every $n\in \Nat$ and $z_1,\dots,z_n\in Z$. For instance, $L^q(\mu)$ is a $q$-convex K{\"o}the function space over~$\mu$. 
We refer the reader to \cite{lin-tza-2} for more information on $q$-convexity and related notions in Banach lattices.

\begin{example}\label{example:C}
Suppose that there is 
an infinite sequence $(A_i)$ of pairwise disjoint elements of~$\Sigma$ with $\mu(A_i)>0$ for all $i\in \Nat$. 
Let $1 < p < \infty$. Let $Z$ be a K{\"o}the function space over~$\mu$ which is $p'$-convex and order continuous.
Then there is a strongly measurable $p$-Pettis integrable function $\phi:\Omega\to Z$ such that its Dunford operator 
$T^p_\phi:L^{p'}(\mu)\to Z$ is not compact.
\end{example}
\begin{proof} 
Since $Z$ is order continuous, its topological dual $Z^*$ coincides with the K\"{o}the dual of~$Z$, i.e. the set
of all $g\in L^1(\mu)$ such that $fg\in L^1(\mu)$ for all $f\in Z$, the duality being given by $\langle f,g\rangle=\int_\Omega fg\, d\mu$
(see e.g. \cite[p.~29]{lin-tza-2}). 
On the other hand, since $Z$ is $p'$-convex, $Z^{*}$ is $p$-concave (see e.g. \cite[Proposition~1.d.4]{lin-tza-2}), i.e. there is a constant $M>0$
such that
\begin{equation}\label{eqn:pconcave}
	\Big( \sum_{i=1}^n  \|h_i\|^p_{Z^{*}} \Big)^{1/p} \leq
    M \Big\|  \Big( \sum_{i=1}^n  |h_i|^p \Big)^{1/p} \Big\|_{Z^*},
\end{equation}
for every $n\in \Nat$ and $h_1,\dots,h_n\in Z^{*}$. 
For each $i\in \Nat$, we fix $f_i\in S_Z$ such that $f_i=f_i\chi_{A_i}$
(e.g. $f_i=\|\chi_{A_i}\|_{Z}^{-1}\chi_{A_i}$) and  
we choose $g_i\in B_{Z^*}$ such that $\langle f_i, g_i \rangle=1$. 
Note that $g_i\chi_{A_i}\in B_{Z^{*}}$ also satisfies this equality, 
so we can assume without loss of generality that $g_i=g_i\chi_{A_i}$.
Define a strongly measurable function
$\phi:\Omega \to Z$ by
$$
	\phi(\omega):= \sum_{i=1}^\infty \mu(A_i)^{-1/p} f_i \, \cdot \, \chi_{A_i}(\omega).
$$

Let us check first that $\phi$ is $p$-Pettis integrable. To this end, take any $g\in Z^*$ and $n\in \Nat$. Observe that
\begin{multline*}
		\int_{\bigcup_{i=1}^n A_i} |\langle \phi,g \rangle|^p \, d\mu
		=
		\sum_{i=1}^n \big| \mu(A_i)^{-1/p} \langle f_i, g \rangle \big|^p \, \mu(A_i)
		=
		\sum_{i=1}^n \big| \langle f_i, g \rangle \big|^p \\
    		=
		\sum_{i=1}^n \Big| \int_\Omega f_i g \, d\mu \Big|^p
		=
		\sum_{i=1}^n \big| \langle f_i, g\chi_{A_i} \rangle \big|^p
		\leq
		\sum_{i=1}^n  \|g \chi_{A_i}\|^p_{Z^*} \\
		\stackrel{\eqref{eqn:pconcave}}{\leq}
		M^p \Big\|  \Big( \sum_{i=1}^n  |g \chi_{A_i}|^p \Big)^{1/p} \Big\|^p_{Z^{*}}
		=
		M^p\Big\| \sum_{i=1}^n  |g \chi_{A_i}|  \Big\|^p_{Z^*}
		\leq
		M^p\| g \|^p_{Z^{*}}.
\end{multline*}
As $n\in \Nat$ is arbitrary, it follows that $\int_{\Omega} |\langle \phi,g \rangle|^p \, d\mu<\infty$. 
Thus, $\phi$ is $p$-Dunford integrable. Since $\phi$ is strongly measurable and $p>1$, 
we conclude that $\phi$ is $p$-Pettis integrable (Corollary~\ref{corollary:DunfordImpliesPettis}).

To finish the proof we will check that the operator $S^p_\phi: Z^* \to L^{p}(\mu)$ is not compact. 
Indeed, since $\langle f_i,g_i\rangle=1$ for all $i\in \Nat$ and 
$$
	\langle f_j,g_i\rangle=\int_\Omega f_jg_i\, d\mu=\int_{A_j\cap A_i} f_jg_i\, d\mu=0 \quad \mbox{whenever }i\neq j,
$$ 
we have
$S^p_\phi(g_i)=\mu(A_i)^{-1/p} \chi_{A_i}$ for all $i\in \Nat$. 
Thus, $(S^p_\phi(g_i))$ is a sequence of norm one vectors in $L^p(\mu)$ having pairwise disjoint supports,
so it does not have norm convergent subsequences. Therefore, $S^p_\phi$ is not compact. 
\end{proof}

The proof of the following result is similar to that of the case $p=1$ (see e.g. \cite[Theorem~9.1]{Musial}) and is included
for the sake of completeness. 

\begin{theorem}\label{theorem:compactVSapprox}
Let $f:\Omega \to X$ be a $p$-Pettis integrable function. The following statements are equivalent:
\begin{enumerate}
\item[(i)] $T^p_f$ is compact;
\item[(ii)] for every $\varepsilon>0$ there is a simple function $h:\Omega \to X$ such that 
$$
	\|f-h\|_{\mathscr{D}_p(\mu,X)}\leq \varepsilon.
$$ 
\end{enumerate} 
\end{theorem}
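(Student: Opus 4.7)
The plan is to exploit the identity $\|f-h\|_{\mathscr{D}_p(\mu,X)} = \|S_f^p - S_h^p\|$ (where $\|\cdot\|$ is the operator norm) together with Schauder's theorem, which by Remark~\ref{remark:basic-pDunford}(v) lets me pass freely between compactness of $T_f^p$ and of $S_f^p$.

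The direction (ii)$\Rightarrow$(i) is the easy one. If $h:\Omega\to X$ is a simple function $h=\sum_{i=1}^n x_i\chi_{A_i}$, then $S_h^p(x^*)=\sum_{i=1}^n \langle x_i,x^*\rangle \chi_{A_i}$ takes values in the finite-dimensional subspace $\mathrm{span}\{\chi_{A_1},\dots,\chi_{A_n\}}\sub L^p(\mu)$, so $S_h^p$ has finite rank. Hence, if condition~(ii) holds, $S_f^p$ is a uniform limit of finite-rank operators, so it is compact, and therefore so is $T_f^p$.

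For (i)$\Rightarrow$(ii), assume $T_f^p$ (equivalently $S_f^p$) is compact, so that
$$
K:=S_f^p(B_{X^*})=\{\langle f,x^*\rangle : x^*\in B_{X^*}\}
$$
is relatively compact in $L^p(\mu)$. The idea is to show that, given $\varepsilon>0$, there is a finite sub-$\sigma$-algebra $\Sigma_0\sub \Sigma$ such that the conditional expectation satisfies
$$
\sup_{k\in K} \bigl\|k-E(k\mid \Sigma_0)\bigr\|_{L^p(\mu)} \leq \varepsilon.
$$
This is a standard consequence of compactness: choose a finite $(\varepsilon/4)$-net $k_1,\dots,k_N\in K$, approximate each $k_i$ within $\varepsilon/4$ by a simple scalar function $s_i\in L^p(\mu)$, let $\Sigma_0$ be the (finite) $\sigma$-algebra generated by all level sets of $s_1,\dots,s_N$, and use that conditional expectation onto $\Sigma_0$ fixes every $s_i$ together with $\|E(\cdot\mid\Sigma_0)\|\leq 1$ on $L^p(\mu)$ to conclude by the triangle inequality.

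Having fixed such $\Sigma_0$, write it as the algebra generated by a finite measurable partition $\{A_1,\dots,A_n\}$ of $\Omega$ with $\mu(A_i)>0$. Here the $p$-Pettis (not merely $p$-Dunford) hypothesis becomes crucial: by Remark~\ref{remark:basic-pDunford}(iv), every integral $\int_{A_i} f\,d\mu$ lies in $X$, so the candidate approximant
$$
h:=\sum_{i=1}^n \frac{1}{\mu(A_i)}\Bigl(\int_{A_i} f\,d\mu\Bigr)\chi_{A_i}
$$
is an honest $X$-valued simple function. For each $x^*\in X^*$ one checks directly that $\langle h,x^*\rangle = E(\langle f,x^*\rangle\mid \Sigma_0)$, since both sides are $\Sigma_0$-measurable and agree in mean on each atom $A_i$. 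Therefore
$$
\|f-h\|_{\mathscr{D}_p(\mu,X)}=\sup_{x^*\in B_{X^*}}\bigl\|\langle f,x^*\rangle - E(\langle f,x^*\rangle\mid \Sigma_0)\bigr\|_{L^p(\mu)}\leq \varepsilon,
$$
which yields (ii). The main obstacle is the compactness-to-uniform-conditional-expectation step; once that is in hand, the Pettis integrability of $f$ supplies the simple function automatically.
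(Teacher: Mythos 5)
Your proof is correct and follows essentially the same route as the paper: both directions hinge on the identity $\|S^p_f-S^p_h\|=\|f-h\|_{\mathscr{D}_p(\mu,X)}$, and for (i)$\Rightarrow$(ii) both approximate $f$ by the $X$-valued simple function obtained by averaging over a suitable finite measurable partition, with the $p$-Pettis integrability guaranteeing that these averages lie in $X$. The only real difference is how the partition is produced: the paper takes a limit along the net of all finite partitions ordered by refinement (using that the averaging operators converge pointwise on $L^p(\mu)$, hence uniformly on the relatively compact set $S^p_f(B_{X^*})$), whereas you build one explicit partition from an $\varepsilon$-net together with the contractivity of conditional expectation --- the two mechanisms are interchangeable and your version is, if anything, slightly more self-contained.
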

\begin{proof} (ii)$\impli$(i): Note that for any simple function $h:\Omega \to X$ the operator
$S^p_h$ has finite rank (hence it is compact) and
$$
	\|S^p_f-S^p_h\|=\|S^p_{f-h}\|=\|f-h\|_{\mathscr{D}_p(\mu,X)}.
$$
From (ii) and the previous comments it follows at once that $S^p_f$ is compact.

(i)$\impli$(ii): Let $\Pi$ be the collection of all partitions of~$\Omega$ into finitely many measurable sets, which becomes
a directed set when ordered by refinement. For each $\mathcal{P}\in \Pi$, we define an operator $U_\mathcal{P}:L^p(\mu)\to L^p(\mu)$ by
$$
	U_\mathcal{P}(g):=\sum_{A\in \mathcal{P}}\frac{\int_A g \, d\mu}{\mu(A)}\cdot \chi_A,
$$
(with the convention $\frac{0}{0}=0$). We have $\sup_{\mathcal{P}\in \Pi}\|U_\mathcal{P}\|\leq 1$ and 
$$
	\lim_{\mathcal{P}}\|U_\mathcal{P}(g)- g\|_{L^p(\mu)}= 0 \quad 
	\mbox{for every }g\in L^p(\mu)
$$ 
(the proof of the case $p=1$ given in \cite[pp.~67--68, Lemma~1]{die-uhl-J}
can be easily adapted to the general case). Therefore, for any relatively norm compact set $K \sub L^p(\mu)$ we have
$$
	\lim_{\mathcal{P}}\sup_{g\in K}\|U_\mathcal{P}(g)- g\|_{L^p(\mu)}= 0.
$$

Fix $\epsilon>0$. Since $S^p_f$ is compact, the set $K:=S^p_f(B_{X^*})$ is relatively norm compact in~$L^p(\mu)$ and, therefore,
there is $\mathcal{P}_0\in \Pi$ such that
\begin{equation}\label{eqn:Upi}
	\sup_{x^*\in B_{X^*}}\|U_\mathcal{P}(\langle f,x^*\rangle)- \langle f,x^*\rangle\|_{L^p(\mu)}\leq \epsilon
	\quad \mbox{for any }\mathcal{P}\in \Pi \mbox{ finer than }\mathcal{P}_0.
\end{equation}
Since $f$ is Pettis integrable, for each $\mathcal{P}\in \Pi$ the simple function
$$
	h_\mathcal{P}:\Omega \to X, \quad h_\mathcal{P}:=\sum_{A\in \mathcal{P}}\frac{1}{\mu(A)}\int_A f \, d\mu \cdot \chi_A,
$$
satisfies $\langle h_\mathcal{P},x^*\rangle =U_\mathcal{P}(\langle f,x^*\rangle)$ for all $x^*\in X^*$. Hence~\eqref{eqn:Upi} reads as
$$
	\|h_\mathcal{P}-f\|_{\mathscr{D}_p(\mu,X)}\leq \epsilon
	\quad
	\mbox{for any }\mathcal{P}\in \Pi \mbox{ finer than }\mathcal{P}_0.
$$ 
This finishes the proof.
\end{proof}

Any $p$-Dunford integrable function $f:\Omega\to X$ is Dunford integrable and we have a factorization
$$
	\xymatrix@R=3pc@C=3pc{L^\infty(\mu)
	\ar[r]^{T^1_f} \ar[d]_{i} & X^{**}\\
	L^{p'}(\mu)  \ar@{->}[ur]_{T^p_f}  & \\
	}
$$
where $i$ is the inclusion operator. Our next result clarifies the relationship between 
the compactness of $T_f^p$ and that of $T_f^1$.

\begin{theorem}\label{theorem:CompactDunfordOperator}
Let $f:\Omega \to X$ be a $p$-Dunford integrable function. The following statements are equivalent:
\begin{itemize}
\item[(i)] $T_f^p$ is compact;
\item[(ii)] $T_f^1$ is compact and 
$$
	Z_f^p=\big\{|\langle f,x^*\rangle|^p: \, x^*\in B_{X^*}\big\},
$$
is relatively norm compact in $L^1(\mu)$;
\item[(iii)] $T_f^1$ is compact and $Z_{f}^p$ is uniformly integrable in $L^1(\mu)$.
\end{itemize}
\end{theorem}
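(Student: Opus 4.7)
The plan is to prove the cycle (i)~$\impli$~(ii)~$\impli$~(iii)~$\impli$~(i). By Schauder's theorem (Remark~\ref{remark:basic-pDunford}(v)) it suffices to discuss the pre-adjoints $S^p_f$ and $S^1_f$.

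For (i)~$\impli$~(ii) I would first observe that, since $\mu$ is a probability measure, the inclusion $i:L^p(\mu)\to L^1(\mu)$ is continuous and $S^1_f=i\circ S^p_f$, so compactness of $S^p_f$ transfers to $S^1_f$. For the second part I would use that the map $\Phi:L^p(\mu)\to L^1(\mu)$, $\Phi(\phi):=|\phi|^p$, is continuous; this follows from the elementary inequality $\bigl||a|^p-|b|^p\bigr|\leq p(|a|^{p-1}+|b|^{p-1})|a-b|$ combined with H\"older's inequality, giving
\[
\|\Phi(\phi)-\Phi(\psi)\|_{L^1(\mu)}\leq p\bigl(\|\phi\|_{L^p(\mu)}^{p-1}+\|\psi\|_{L^p(\mu)}^{p-1}\bigr)\|\phi-\psi\|_{L^p(\mu)}.
\]
Since $Z^p_f=\Phi(S^p_f(B_{X^*}))$, the continuous image of a relatively compact set is relatively compact, establishing (ii). The implication (ii)~$\impli$~(iii) is then immediate: relatively norm compact subsets of $L^1(\mu)$ are relatively weakly compact, and by the Dunford--Pettis characterization (cf.\ \cite[p.~76, Theorem~15]{die-uhl-J}) this is equivalent to uniform integrability.

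The core of the argument is (iii)~$\impli$~(i), which I would handle by a Vitali-type subsequence argument. Given any sequence $(x^*_n)$ in $B_{X^*}$, compactness of $S^1_f$ produces a subsequence (not relabelled) with $\langle f,x^*_n\rangle\to g$ in $L^1(\mu)$ for some $g\in L^1(\mu)$; passing to a further subsequence I may assume $\langle f,x^*_n\rangle\to g$ $\mu$-a.e., so $|\langle f,x^*_n\rangle|^p\to|g|^p$ $\mu$-a.e. Since $\{|\langle f,x^*_n\rangle|^p\}_{n\in\Nat}\sub Z^p_f$ is uniformly integrable by hypothesis, Vitali's convergence theorem applied in $L^p(\mu)$ upgrades the $\mu$-a.e.\ convergence to $\langle f,x^*_n\rangle\to g$ in $L^p(\mu)$. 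This exhibits $S^p_f(B_{X^*})$ as relatively norm compact in $L^p(\mu)$, proving (i).

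The one conceptual point that drives the argument is the observation in (iii)~$\impli$~(i) that $L^1$-compactness provides a $\mu$-a.e.\ convergent subsequence and that uniform integrability of the $p$-th powers is exactly the additional ingredient required by Vitali to promote $\mu$-a.e.\ convergence to convergence in $L^p(\mu)$. Once this is in place, the cycle closes with only routine estimates, so I do not anticipate any serious obstacle.
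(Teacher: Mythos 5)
Your proposal is correct and follows essentially the same route as the paper: the factorization $S^1_f=i\circ S^p_f$ plus the continuity of $\phi\mapsto|\phi|^p$ from $L^p(\mu)$ to $L^1(\mu)$ for (i)$\impli$(ii), and an a.e.-convergent subsequence extracted from the compactness of $S^1_f$ combined with Vitali's theorem for (iii)$\impli$(i). The only (immaterial) difference is that the paper identifies the a.e.\ limit as $\langle f,x^*\rangle$ for a $w^*$-cluster point $x^*$ of the subsequence and applies Vitali in $L^1(\mu)$ to $|\langle f,x^*_{n_k}-x^*\rangle|^p$, whereas you keep the abstract $L^1$-limit $g$ and apply the $L^p$ form of Vitali, which suffices since relative compactness does not require the limit to lie in $S^p_f(B_{X^*})$.
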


The proof of Theorem~\ref{theorem:CompactDunfordOperator} requires a couple of lemmas.

\begin{lemma}\label{lemma:sequential}
Let $f:\Omega \to X$ be a $p$-Dunford integrable function. If $T^p_f$ is compact, then for every sequence $(x_n^*)$ in~$B_{X^*}$ there exist a subsequence $(x_{n_k}^*)$
and $x^*\in B_{X^*}$ such that $\langle f,x_{n_k}^* \rangle \to \langle f,x^*\rangle$ $\mu$-a.e.
\end{lemma}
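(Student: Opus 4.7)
The plan is to combine Schauder's theorem with a routine diagonal/cluster-point argument. By Remark~\ref{remark:basic-pDunford}(v), compactness of $T^p_f$ is equivalent to compactness of $S^p_f:X^*\to L^p(\mu)$, so the image of any bounded sequence from $X^*$ under $S^p_f$ is relatively norm compact in $L^p(\mu)$.

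First, given a sequence $(x_n^*)$ in $B_{X^*}$, I would use compactness of $S^p_f$ to extract a subsequence $(x_{n_k}^*)$ such that $\langle f, x_{n_k}^*\rangle = S^p_f(x_{n_k}^*)$ converges in the $L^p(\mu)$-norm to some $h\in L^p(\mu)$. Passing to a further subsequence (which I will still denote by $(x_{n_k}^*)$), I may also arrange $\langle f, x_{n_k}^*\rangle \to h$ $\mu$-a.e.

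Next, I would invoke Banach--Alaoglu to produce a $w^*$-cluster point $x^*\in B_{X^*}$ of the sequence $(x_{n_k}^*)$. Fix any subnet $(x_{n_{k(\alpha)}}^*)$ that $w^*$-converges to $x^*$. For \emph{every} $\omega\in\Omega$, evaluation at $f(\omega)$ gives
\[
   \langle f(\omega), x_{n_{k(\alpha)}}^*\rangle \;\longrightarrow\; \langle f(\omega), x^*\rangle.
\]
On the full-measure set $\Omega_0\sub \Omega$ where $\langle f(\omega), x_{n_k}^*\rangle \to h(\omega)$, every subnet must have the same limit, so $\langle f(\omega), x^*\rangle = h(\omega)$ for all $\omega\in\Omega_0$. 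In particular $\langle f, x^*\rangle = h$ $\mu$-a.e., and therefore $\langle f, x_{n_k}^*\rangle \to \langle f, x^*\rangle$ $\mu$-a.e.

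I do not foresee a real obstacle: the only point that deserves care is that we ask for a \emph{subsequential} conclusion while Banach--Alaoglu only supplies a $w^*$-cluster point via subnets; but since the pointwise limit of the sequence $(\langle f(\omega), x_{n_k}^*\rangle)$ already exists on a full-measure set, any $w^*$-cluster point $x^*$ is automatically compatible with it, which is all that is needed.
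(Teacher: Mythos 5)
Your proof is correct and follows essentially the same route as the paper's: extract a subsequence norm-convergent in $L^p(\mu)$ by compactness of $S^p_f$, pass to a further subsequence converging $\mu$-a.e.\ to $h$, take a $w^*$-cluster point $x^*\in B_{X^*}$, and identify $h$ with $\langle f,x^*\rangle$ a.e.\ by noting that $\langle f(\omega),x^*\rangle$ must agree with the already-existing pointwise limit. The subnet phrasing is just a different way of saying that $\langle f(\omega),x^*\rangle$ is a cluster point of the convergent real sequence $(\langle f(\omega),x_{n_k}^*\rangle)$, which is exactly the paper's argument.
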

\begin{proof}
The set $S_f^p(B_{X^*})$ is relatively norm compact in~$L^p(\mu)$. Hence there exist a subsequence $(x_{n_k}^*)$ and $h\in L^p(\mu)$ such that
$\|\langle f,x_{n_k}^* \rangle-h\|_{L^p(\mu)}\to 0$. By passing to a further subsequence, not relabeled, we can assume that $\langle f,x_{n_k}^* \rangle \to h$ $\mu$-a.e.
Let $x^*\in B_{X^*}$ be a $w^*$-cluster point of $(x_{n_k}^*)$. Then $\langle f(\omega),x^* \rangle$ is a cluster
point of the sequence of real numbers $(\langle f(\omega),x_{n_k}^* \rangle)$ for each $\omega \in \Omega$. It follows
that $h= \langle f,x^* \rangle$ $\mu$-a.e. and $\langle f,x_{n_k}^* \rangle \to \langle f,x^*\rangle$ $\mu$-a.e.
\end{proof}

We include a proof of the following well-known fact since we did not find a suitable reference for it.

\begin{lemma}\label{lemma:elevar}
The map $L^p(\mu)\to L^1(\mu)$ given by $h\mapsto |h|^p$ is norm-to-norm continuous.
\end{lemma}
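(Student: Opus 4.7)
The statement to establish is: if $h_n \to h$ in $L^p(\mu)$, then $|h_n|^p \to |h|^p$ in $L^1(\mu)$. My plan is to reduce everything to a pointwise elementary inequality combined with H\"older.

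First I would dispose of the trivial case $p=1$, where the estimate $||a|-|b|| \leq |a-b|$ gives norm-to-norm continuity immediately. For $p>1$, the key pointwise tool is the inequality
$$
\bigl||a|^p - |b|^p\bigr| \leq p\bigl(|a|^{p-1}+|b|^{p-1}\bigr)|a-b|\quad\text{for all } a,b\in\mathbb{R},
$$
which follows from the mean value theorem applied to $t\mapsto t^p$ on $[0,\infty)$: for $0 \le \alpha \le \beta$ one has $\beta^p - \alpha^p \leq p\beta^{p-1}(\beta-\alpha)$, and then $\beta^{p-1} \leq |a|^{p-1}+|b|^{p-1}$ when $\{\alpha,\beta\}=\{|a|,|b|\}$.

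Next I would apply this inequality pointwise to $h_n$ and $h$, integrate, and invoke H\"older's inequality with conjugate exponents $p$ and $p' = p/(p-1)$. Since $\bigl\||h|^{p-1}\bigr\|_{L^{p'}(\mu)} = \|h\|_{L^p(\mu)}^{p-1}$ (and similarly for $h_n$), this yields
$$
\bigl\||h_n|^p - |h|^p\bigr\|_{L^1(\mu)} \leq p\bigl(\|h_n\|_{L^p(\mu)}^{p-1} + \|h\|_{L^p(\mu)}^{p-1}\bigr)\,\|h_n-h\|_{L^p(\mu)}.
$$
Because $h_n \to h$ in $L^p(\mu)$, the sequence $\bigl(\|h_n\|_{L^p(\mu)}\bigr)$ is bounded, so the right-hand side tends to $0$, which is exactly what is needed.

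I do not expect any real obstacle: the only slightly delicate point is verifying the elementary inequality, and that is a one-line mean value theorem argument. Everything else is a direct application of H\"older.
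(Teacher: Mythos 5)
Your proof is correct and follows essentially the same route as the paper: the trivial estimate for $p=1$, and for $p>1$ the pointwise inequality $\bigl||a|^p-|b|^p\bigr|\leq p\bigl(|a|^{p-1}+|b|^{p-1}\bigr)|a-b|$ followed by H\"older with exponents $p'$ and $p$ and boundedness of $\|h_n\|_{L^p(\mu)}$. The only cosmetic difference is that you absorb the step $\bigl||h_n|-|h|\bigr|\leq|h_n-h|$ into the pointwise inequality and compute $\bigl\||h|^{p-1}\bigr\|_{L^{p'}(\mu)}$ explicitly rather than bounding it by a uniform constant.
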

\begin{proof}
Let $(h_n)$ be a norm convergent sequence in $L^p(\mu)$, with limit $h\in L^p(\mu)$. 
If $p=1$, then we have
$$
	\big\||h_n|-|h|\big\|_{L^1(\mu)}=\int_\Omega \big||h_n|-|h|\big| \, d\mu\leq \int_\Omega |h_n-h| \, d\mu \to 0. 
$$
Suppose now that $p>1$. Fix a constant $C>0$ such that $\|h_n\|_{L^p(\mu)}\leq C$ for all $n\in \Nat$.
Bearing in mind that
$$
	|a^p-b^p|\leq p\cdot |a^{p-1}+b^{p-1}| \cdot |a-b| \quad
	\mbox{for every }a,b\geq 0,
$$
H\"{o}lder's inequality and the triangle
inequality in~$L^{p'}(\mu)$, we obtain
\begin{multline*}
	\int_\Omega  \big||h_n|^p - |h|^p \big|  \, d \mu \le
	\int_\Omega  \, p\cdot \big| |h_n|^{p-1} + |h|^{p-1} \big| \cdot
	\big| |h_n| - |h| \big| \, d \mu
	\\
	\le 
	p \cdot \Big\||h_n|^{p-1} + |h|^{p-1} \Big\|_{L^{p'}(\mu)}
	\cdot  \Big( \int_\Omega 
	\big| |h_n| - |h| \big|^{p} \, d \mu \Big)^{1/p}
	\\
	\le
	2p \, C^{p/p'}
	\cdot  \Big( \int_\Omega \big| h_n - h \big|^{p} \, d \mu \Big)^{1/p} \to 0.
\end{multline*}
This proves that $|h_n|^p\to |h|^p$ in~$L^1(\mu)$.
\end{proof}

\begin{proof}[Proof of Theorem~\ref{theorem:CompactDunfordOperator}] 

(i)$\impli$(ii): Clearly, $T^1_f$ is compact as it factors through~$T^p_f$. On the other hand,
$S^p_{f}(B_{X^*}) = \{  \langle f, x^* \rangle: \, x^* \in B_{X^*} \big\}$
is relatively norm compact in $L^p(\mu)$. Therefore, an appeal to Lemma~\ref{lemma:elevar} ensures that
$Z_f^p$ is relatively norm compact in~$L^1(\mu)$.

(ii)$\impli$(iii): Obvious.

(iii)$\impli$(i): We will check that $S_f^p$ is compact. Let $(x_n^*)$ be a sequence in~$B_{X^*}$. By Lemma~\ref{lemma:sequential} (applied to $T_f^1$),
there exist a subsequence $(x_{n_k}^*)$ and $x^*\in B_{X^*}$ such that $\langle f,x^*_{n_k} \rangle \to \langle f,x^*\rangle$ $\mu$-a.e.
For each $k\in \Nat$, define $h_k:=|\langle f,x_{n_k}^*-x^*\rangle|^p \in L^1(\mu)$. Then $h_k\to 0$ $\mu$-a.e.
and $(h_k)$ is uniformly integrable in~$L^1(\mu)$ (bear in mind that $2^{-p}\cdot h_k\in Z^p_f$ for all $k\in \Nat$).
We can apply Vitali's convergence theorem to conclude that $\|h_k\|_{L^1(\mu)}\to 0$, that is,
$S^p_f(x_{n_k}^*)=\langle f,x_{n_k}^*\rangle \to S^p_f(x^*)=\langle f,x^*\rangle$ in the norm topology of~$L^p(\mu)$.
This proves that $S_f^p$ is compact. 
\end{proof}

Let $f:\Omega \to X$ be a Dunford integrable function  such that
$$
	Z_f=\{\langle f,x^* \rangle: \, x^*\in B_{X^*}\},
$$
is uniformly integrable in~$L^1(\mu)$. The Dunford operator $T^1_f$ is known to be compact
in many cases, for instance, if $\mu$ is perfect (e.g. a Radon measure) or if $X\not \supseteq \ell^1(\aleph_1)$, 
see e.g. \cite[Theorem~4-1-6]{tal}. In particular,
the compactness of $T^1_f$ is guaranteed if either $\mu$ is the Lebesgue measure on~$[0,1]$ or 
$X$ is weakly compactly generated.
 
Let us say that $X$ has the {\em Dunford Compactness Property} with respect to~$\mu$ (shortly $\mu$-DCP) if
$T_f^1$ is compact whenever $f:\Omega \to X$ is Dunford integrable and $Z_f$ is uniformly integrable in~$L^1(\mu)$.
For more information on this property, we refer to \cite[Chapter~9]{Musial}, \cite[Section~5]{mus3} and \cite[Chapter~4]{tal}.

\begin{corollary}\label{cor:DCP}
Suppose $X$ has the $\mu$-DCP. Let $f:\Omega \to X$ be a $p$-Dunford integrable function. The following statements are equivalent:
\begin{itemize}
\item[(i)] $T_f^p$ is compact;
\item[(ii)]$Z_{f}^p$ is uniformly integrable in $L^1(\mu)$.
\end{itemize}
\end{corollary}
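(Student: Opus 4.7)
The plan is to deduce the corollary by inserting the $\mu$-DCP hypothesis into the three-way equivalence of Theorem~\ref{theorem:CompactDunfordOperator}. The implication (i)$\impli$(ii) is immediate: Theorem~\ref{theorem:CompactDunfordOperator} already yields, from the compactness of $T_f^p$, that $Z_f^p$ is uniformly integrable in $L^1(\mu)$ (in fact even relatively norm compact).

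For (ii)$\impli$(i), I would aim to invoke Theorem~\ref{theorem:CompactDunfordOperator}(iii)$\impli$(i). Since $Z_f^p$ is assumed uniformly integrable in $L^1(\mu)$, the only missing ingredient is the compactness of $T_f^1$. Because $f$ is Dunford integrable (Remark~\ref{remark:basic-pDunford}(i)) and $X$ enjoys the $\mu$-DCP, it suffices to check that $Z_f$ itself is uniformly integrable in $L^1(\mu)$; this is the one non-tautological step.

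For this final verification I would distinguish two cases. If $p>1$, the uniform integrability of $Z_f$ is automatic from Remark~\ref{remark:basic-pDunford}(ii): $Z_f$ is a bounded subset of $L^p(\mu)$, hence uniformly integrable in $L^1(\mu)$ by H\"older's inequality. If $p=1$, then $Z_f^p=\{|\langle f,x^*\rangle|:x^*\in B_{X^*}\}$, and since uniform integrability depends only on the moduli of the functions, the assumed uniform integrability of $Z_f^p$ is literally the uniform integrability of $Z_f$. In either case the $\mu$-DCP applies, giving compactness of $T_f^1$, and Theorem~\ref{theorem:CompactDunfordOperator} then delivers the compactness of $T_f^p$.

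There is essentially no obstacle here; the only thing one must be careful about is not to overlook the boundary case $p=1$, where the reduction to $Z_f$ being uniformly integrable requires the observation that $\int_A |\langle f,x^*\rangle|^p\, d\mu=\int_A|\langle f,x^*\rangle|\, d\mu$, rather than any inequality between $|\langle f,x^*\rangle|$ and $|\langle f,x^*\rangle|^p$.
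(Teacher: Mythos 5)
Your proposal is correct and follows essentially the same route as the paper: both reduce to Theorem~\ref{theorem:CompactDunfordOperator} by observing that (ii) forces $Z_f$ to be uniformly integrable, so that the $\mu$-DCP yields compactness of $T_f^1$. The only cosmetic difference is that the paper handles all $1\leq p<\infty$ at once via the single inequality $\int_A|\langle f,x^*\rangle|\,d\mu\leq\big(\int_A|\langle f,x^*\rangle|^p\,d\mu\big)^{1/p}$ on the probability space, whereas you split into the cases $p>1$ and $p=1$.
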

\begin{proof}
Note that (ii) implies that $Z_f$ is uniformly integrable in $L^1(\mu)$, because H\"{o}lder's inequality yields
$$
	\int_A|\langle f,x^*\rangle| \, d\mu \leq \Big(\int_A |\langle f,x^*\rangle|^p \, d\mu\Big)^{1/p}
	\quad\mbox{for every }A\in \Sigma \mbox{ and }x^*\in B_{X^*}.
$$
The result now follows at once from Theorem~\ref{theorem:CompactDunfordOperator}. 
\end{proof}

Bearing in mind that separable Banach spaces have the $\mu$-DCP, we get:

\begin{corollary}\label{cor:CompactSM}
Let  $f:\Omega \to X$ be a strongly measurable $p$-Dunford integrable function. The following statements are equivalent:
\begin{itemize}
\item[(i)] $T_f^p$ is compact;
\item[(ii)]$Z_{f}^p$ is uniformly integrable in $L^1(\mu)$.
\end{itemize}
\end{corollary}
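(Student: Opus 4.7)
The plan is to reduce to Corollary~\ref{cor:DCP} by passing to a separable subspace. Since $f$ is strongly measurable, Pettis' measurability theorem provides a separable subspace $Y \sub X$ and a set $A \in \Sigma$ with $\mu(\Omega \setminus A)=0$ such that $f(A) \sub Y$. Redefining $f$ to be $0$ on $\Omega \setminus A$, I may regard $f$ as a function $\tilde{f}:\Omega \to Y$; this modification is harmless since it alters $f$ only on a null set and does not affect either the compactness of $T_f^p$ or the uniform integrability of $Z_f^p$.

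Next I would verify that neither of the two conditions depends on whether $f$ is viewed as $X$-valued or $Y$-valued. Writing $j:Y \hookrightarrow X$ for the inclusion, Hahn--Banach gives that the restriction map $j^*:X^* \to Y^*$ satisfies $j^*(B_{X^*})=B_{Y^*}$, and one has $\langle f,x^*\rangle = \langle \tilde{f},j^*x^*\rangle$ $\mu$-a.e. for every $x^* \in X^*$. Hence $Z_f^p = Z_{\tilde{f}}^p$, so uniform integrability in $L^1(\mu)$ transfers verbatim. For the Dunford operator, the factorization $T_f^p = j^{**}\circ T_{\tilde{f}}^p$ through the isometric embedding $j^{**}:Y^{**}\hookrightarrow X^{**}$ shows that $T_f^p$ is compact if and only if $T_{\tilde{f}}^p$ is.

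Finally, since $Y$ is separable, $Y$ has the $\mu$-DCP (as noted in the paragraph preceding the corollary), so Corollary~\ref{cor:DCP} applies to $\tilde{f}:\Omega \to Y$ and immediately yields the equivalence (i)$\Leftrightarrow$(ii). There is no substantive obstacle here; the entire argument is the bookkeeping that compactness of the Dunford operator and uniform integrability of $Z_f^p$ pass freely between the $X$-valued and $Y$-valued viewpoints, after which Corollary~\ref{cor:DCP} closes the argument.
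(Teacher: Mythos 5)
Your argument is correct and is exactly the route the paper intends: the paper derives this corollary from Corollary~\ref{cor:DCP} by the one-line remark that separable Banach spaces have the $\mu$-DCP, leaving implicit the reduction (via Pettis' measurability theorem) to a separable subspace that you carry out explicitly. The bookkeeping you supply --- $j^*(B_{X^*})=B_{Y^*}$, hence $Z_f^p=Z_{\tilde f}^p$ and $T_f^p=j^{**}\circ T_{\tilde f}^p$ with $j^{**}$ an isometric embedding --- is precisely what is needed and is correct.
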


\section{$p$-summing operators and $p$-Dunford integrable functions}\label{section:summing}

Vector measures of bounded $p$-variation play a relevant role in the duality theory of
Lebesgue-Bochner spaces, see \cite[p.~115]{die-uhl-J} and \cite[\S13]{din-J}. We begin this section
by giving some auxiliary results on the $p$-variation of a vector measure which will be helpful when studying
the composition of $p$-Dunford integrable functions with $p$-summing operators.

We denote by $\Pi$ the collection of all finite partitions of $\Omega$ into measurable sets.
Given a Banach space $Z$, we write $V(\mu,Z)$ for the set of all finitely additive vector measures $\nu:\Sigma\to Z$
such that $\nu(A)=0$ whenever $\mu(A)=0$.

\begin{definition}\label{definition:pVariation}
Let $Z$ be a Banach space. The {\em total $p$-variation} of~$\nu\in V(\mu,Z)$ is defined by
\begin{eqnarray*}
	|\nu|_p(\Omega)&:=&\sup\left\{
	\left(\sum_{A\in \mathcal{P}} \frac{\|\nu(A)\|_Z^{p}}{\mu(A)^{p-1}}\right)^{1/p}: \, \mathcal{P} \in \Pi
	\right\}\\
	&=&
	\sup\left\{\sum_{A\in \mathcal{P}} |\alpha_A|\big\|\nu(A)\big\|_Z: \, \mathcal{P} \in \Pi, \, \sum_{A\in \mathcal{P}} \alpha_A\chi_{A}\in B_{L^{p'}(\mu)}\right\} \in [0,\infty],
\end{eqnarray*}
(with the convention $\frac{0}{0^{p-1}}=0$). We say that $\nu$ has {\em bounded $p$-variation} if $|\nu|_p(\Omega)$ is finite.
\end{definition}

A function $f:\Omega \to X$ is said to be {\em $p$-Bochner integrable} if it is strongly measurable and the real-valued
function $\|f(\cdot)\|_X$ belongs to~$L^p(\mu)$. In this case, $f$ is $p$-Pettis integrable and it is known that
the countably additive vector measure
$$
	\nu_f:\Sigma\to X, \quad \nu_f(A)=\int_A f \, d\mu,
$$
has bounded $p$-variation and 
$$
	\big|\nu_f\big|_p(\Omega)=\Big(\int_\Omega \|f(\cdot)\|^p_X\, d\mu\Big)^{1/p}=:\|f\|_{L^p(\mu,X)}.
$$ 
In Theorem~\ref{theorem:pVariation} below we show that this equality holds for 
any strongly measurable $p$-Dunford integrable function~$f:\Omega \to X$. 
Of course, in this more general case the total $p$-variation of the corresponding measure~$\nu_f$ (which belongs to $V(\mu,X^{**})$) can be infinite.
We should point out that Theorem~\ref{theorem:pVariation} for $p=1$ is a particular case of 
a well-known result (see e.g. \cite[Theorem~4.1 and Remark~4.2]{Musial}). 

The following lemma is folklore (see e.g. \cite[p.~42, Corollary~3]{die-uhl-J}).

\begin{lemma}\label{lem:countably-valued}
A function $f:\Omega \to X$ is strongly measurable if and only if for every
$\epsilon>0$ there exist a sequence $(A_{n})$ of pairwise disjoint measurable sets 
and a sequence $(x_{n})$ in~$X$ such that
the function $g:=\sum_{n=1}^{\infty}x_{n}\chi_{A_{n}}$ satisfies
$\|f(\omega)-g(\omega)\|_X \leq \epsilon$ for $\mu$-a.e. $\omega\in \Omega$. 
\end{lemma}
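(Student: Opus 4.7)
The plan is to prove the two implications separately, using Pettis' measurability theorem (recalled in Section~\ref{section:preliminaries}) for the nontrivial direction.

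For the forward implication, assume that $f$ is strongly measurable. Pettis' measurability theorem gives a set $A\in\Sigma$ with $\mu(\Omega\setminus A)=0$ such that $f(A)$ is norm-separable. Fix $\epsilon>0$ and pick a countable dense set $\{x_n:n\in\Nat\}\sub f(A)$. The idea is to set $B_n:=\{\omega\in A:\|f(\omega)-x_n\|_X\leq\epsilon\}$, disjointify via $A_1:=B_1$ and $A_n:=B_n\setminus\bigcup_{i<n}B_i$ for $n\geq 2$, and let $g:=\sum_{n=1}^{\infty}x_n\chi_{A_n}$. Density of $\{x_n\}$ in $f(A)$ forces $\bigcup_n B_n=A$, so $g$ is defined (and vanishes) off a null set, and by construction $\|f(\omega)-g(\omega)\|_X\leq\epsilon$ for every $\omega\in A$.

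For the converse, given the approximation hypothesis, choose for each $k\in\Nat$ a countably-valued function $g_k=\sum_{n}x_n^{(k)}\chi_{A_n^{(k)}}$ with $\|f-g_k\|_X\leq 1/k$ $\mu$-a.e. Each $g_k$ is strongly measurable, since the finite partial sums $\sum_{n=1}^N x_n^{(k)}\chi_{A_n^{(k)}}$ are simple functions converging pointwise to $g_k$ on $\bigcup_n A_n^{(k)}$. Because $g_k\to f$ uniformly $\mu$-a.e., and strong measurability is preserved under $\mu$-a.e. pointwise limits, $f$ is strongly measurable. (Alternatively, one can directly assemble a sequence of simple functions converging to $f$ a.e.\ by truncating each $g_k$ suitably and diagonalising.)

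The one point requiring care, though standard, is the measurability of the sets $B_n$ in the forward direction: it amounts to showing that $\omega\mapsto\|f(\omega)-x\|_X$ is measurable for each $x\in X$. This I would dispose of by taking a sequence of simple functions $s_m\to f$ $\mu$-a.e.\ (available from strong measurability), noting that each $\omega\mapsto\|s_m(\omega)-x\|_X$ is simple hence measurable, and letting $m\to\infty$. I expect this verification and the density argument for $\bigcup_n B_n=A$ to be the only substantive bookkeeping; everything else is formal.
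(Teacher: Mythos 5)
Your proof is correct. The paper does not prove this lemma at all --- it is stated as folklore with a citation to \cite[p.~42, Corollary~3]{die-uhl-J} --- and your argument (Pettis' measurability theorem to get essential separability, $\epsilon$-balls around a countable dense subset of the range, disjointification, plus the standard closure of strong measurability under a.e.\ limits for the converse) is precisely the standard proof given in that reference.
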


\begin{theorem}\label{theorem:pVariation}
Let $f:\Omega \to X$ be a strongly measurable $p$-Dunford integrable function. Then
\begin{equation}\label{eqn:pFormula}
	\big|\nu_f\big|_p(\Omega)=\Big(\int_\Omega \|f(\cdot)\|_X^p \, d\mu\Big)^{1/p}.
\end{equation}
In particular, $f$ is $p$-Bochner integrable if and only if $\nu_f$ has bounded $p$-variation.
\end{theorem}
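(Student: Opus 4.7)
The plan is to prove \eqref{eqn:pFormula} via two independent inequalities, neither of which requires the $p$-Bochner integrability of~$f$ as input.

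For the upper bound $|\nu_f|_p(\Omega)\le (\int_\Omega\|f\|_X^p\,d\mu)^{1/p}$, I would fix $A\in\Sigma$ and use the pointwise inequality $|\langle f,x^*\rangle|\le \|f\|_X$ for $x^*\in B_{X^*}$, which yields $\|\nu_f(A)\|_{X^{**}}\le \int_A \|f\|_X\,d\mu$. H\"older's inequality then gives $\|\nu_f(A)\|_{X^{**}}^p/\mu(A)^{p-1}\le \int_A \|f\|_X^{p}\, d\mu$ (with the convention $0/0=0$), and summing over any finite measurable partition of~$\Omega$ bounds $|\nu_f|_p(\Omega)^p$ by $\int_\Omega\|f\|^p_X\,d\mu$. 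Strong measurability is used here only to make sense of $\|f\|_X$ as a measurable function.

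For the lower bound, I would invoke Pettis' measurability theorem to find a separable subspace $Y\sub X$ with $f(\omega)\in Y$ $\mu$-a.e., and then Hahn--Banach to produce a countable $1$-norming set $\{x_n^*\}_{n\in\N}\sub B_{X^*}$ for~$Y$, so that $\|f\|_X=\sup_n|\langle f,x_n^*\rangle|$ $\mu$-a.e. Set $\phi_N:=\max_{n\le N}|\langle f,x_n^*\rangle|$. By monotone convergence the goal reduces to $\|\phi_N\|_{L^p(\mu)}\le |\nu_f|_p(\Omega)$ for every $N\in\N$, and by $L^p$--$L^{p'}$ duality with simple functions dense in~$L^{p'}(\mu)$ it suffices to bound $\int_\Omega \phi_N|\psi|\,d\mu$ for any simple $\psi=\sum_{A\in\mathcal{P}}\alpha_A\chi_A$ with $\|\psi\|_{p'}\le 1$. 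The key device is to refine $\mathcal{P}$ by intersecting each $A$ with the measurable pieces
\[
B_n^{\pm}:=\bigl\{\omega\in\Omega:\ n(\omega)=n,\ \pm\langle f(\omega),x_n^*\rangle\ge 0\bigr\},
\]
where $n(\omega):=\min\{n\le N:|\langle f(\omega),x_n^*\rangle|=\phi_N(\omega)\}$ breaks ties. On each such piece $\phi_N=\pm\langle f,x_n^*\rangle$, so integration gives $\int_{A\cap B_n^{\pm}}\phi_N\,d\mu\le \|\nu_f(A\cap B_n^{\pm})\|_{X^{**}}$; and since the refined coefficients $|\alpha_{A\cap B_n^{\pm}}|:=|\alpha_A|$ reproduce exactly $|\psi|\in B_{L^{p'}(\mu)}$, the second formulation of $|\nu_f|_p(\Omega)$ in Definition~\ref{definition:pVariation} closes the estimate.

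The main obstacle is precisely this last step: converting the \emph{supremum} expression $\|f\|_X=\sup_n|\langle f,x_n^*\rangle|$ into a partition-type bound controlled by~$\nu_f$. The simultaneous splitting by the index realising the maximum \emph{and} by the sign of $\langle f,x_n^*\rangle$ is the crucial manoeuvre that turns pointwise control into set-function control; strong measurability enters there both to guarantee a countable norming family and to ensure the indices $B_n^{\pm}$ are measurable.
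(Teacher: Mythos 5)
Your proof is correct, but the lower bound is obtained by a genuinely different route from the paper's. For the inequality $\le$ the two arguments are essentially equivalent (you apply H\"older set-by-set to the first formulation of $|\nu_f|_p(\Omega)$; the paper picks Hahn--Banach functionals $x_A^*$ and applies H\"older to $\|f\|_X\cdot\|\tilde g\|_{X^*}$ using the second formulation). For the inequality $\ge$ the paper proceeds via Lemma~\ref{lem:countably-valued}: it first treats countably-valued functions $\sum_n x_n\chi_{A_n}$ by an explicit choice of $g\in S_{L^{p'}(\mu)}$, and then transfers the estimate to general $f$ through an $\varepsilon$-uniform approximation, using the already-proved upper bound applied to $f-g$ and the triangle inequality for $|\cdot|_p(\Omega)$. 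You instead dualize $\|f\|_X$ directly: separability of the essential range gives a countable $1$-norming family, and the partition of $\Omega$ by the index realising $\max_{n\le N}|\langle f,x_n^*\rangle|$ together with the sign of $\langle f,x_n^*\rangle$ converts the pointwise supremum into the weighted-sum formulation of $|\nu_f|_p(\Omega)$, with $L^p$--$L^{p'}$ duality and monotone convergence finishing the job. Your version is self-contained (no approximation lemma, no perturbation step) and makes explicit exactly where strong measurability enters, at the cost of the slightly more delicate partition bookkeeping; the paper's version is more modular and reuses the upper bound as a black box. Two cosmetic points to tidy up: as written $B_n^+\cap B_n^-$ contains the zeros of $\langle f,x_n^*\rangle$, so make one of the inequalities strict to get a genuine partition (harmless, since $\phi_N=0$ there); and the duality step is legitimate because $\phi_N\le\sum_{n\le N}|\langle f,x_n^*\rangle|$ already lies in $L^p(\mu)$, which you may want to say explicitly.
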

\begin{proof}
Let us prove the inequality ``$\leq$'' in~\eqref{eqn:pFormula}. Of course, we may (and do) assume that $f$ is $p$-Bochner integrable
since otherwise the inequality is obvious.
Take $\mathcal{P}\in \Pi$ and $g\in B_{L^{p'}(\mu)}$ of the form $g=\sum_{A\in \mathcal{P}} \alpha_A \chi_{A}$, where $\alpha_A\in \mathbb R$. 
Fix $\varepsilon>0$. Then for each $A\in \mathcal{P}$ there exists $x_A^*\in S_{X^*}$ such that 
\begin{equation}\label{eqn:HB}
	|\alpha_A|\big\|\nu_f(A)\big\|_{X^{**}} \leq |\alpha_A| \langle \nu_f(A),x^*_A\rangle +\frac{\varepsilon}{\# \mathcal{P}}
	=|\alpha_A|  \int_{A} \langle f,x^*_A \rangle \, d\mu +\frac{\varepsilon}{\# \mathcal{P}},
\end{equation}
(where $\# \mathcal{P}$ stands for the cardinality of~$\mathcal{P}$). 
Let $\tilde{g}:\Omega \to X^*$ be the simple function defined by $\tilde{g}:=\sum_{A\in \mathcal{P}} |\alpha_A|  x^*_A \chi_{A}$.
By applying H\"{o}lder's inequality to the non-negative measurable functions
$\|f(\cdot)\|_{X}$ and $\|\tilde{g}(\cdot)\|_{X^*}=|g(\cdot)|$, we have:
$$
	\sum_{A\in \mathcal{P}} |\alpha_A| \big\|\nu_f(A)\big\|_{X^{**}} \stackrel{\eqref{eqn:HB}}{\leq} 
	\sum_{A\in \mathcal{P}} |\alpha_A| \int_{A} \langle f,x^*_A\rangle d\mu + \varepsilon
$$
$$
	=\int_{\Omega}   \left\langle f(\cdot), \tilde{g}(\cdot)\right\rangle d\mu + \varepsilon
	\leq\int_{\Omega}   \|f(\cdot)\|_{X} \|\tilde{g}(\cdot)\|_{X^*} \, d\mu + \varepsilon
$$
$$
	\leq \Big(\int_{\Omega} \|f(\cdot)\|^p_{X} \, d\mu\Big)^{1/p} \|g\|_{L^{p'}(\mu)}+ \varepsilon
	\leq \Big(\int_{\Omega} \|f(\cdot)\|^p_{X} \, d\mu\Big)^{1/p} + \varepsilon.
$$
Since both $\varepsilon>0$ and $g$ are arbitrary we obtain 
\begin{equation}\label{eqn:first}
	\big|\nu_f\big|_p(\Omega)\leq \Big(\int_\Omega \|f(\cdot)\|_X^p \, d\mu\Big)^{1/p}.
\end{equation}

In order to prove the inequality ``$\geq$'' in~\eqref{eqn:pFormula} we can assume that $\big|\nu_f\big|_p(\Omega)$ is finite. We begin with the
following:

{\em Particular case}. Suppose that $f=\sum_{n=1}^\infty x_n \chi_{A_n}$, where $(A_n)$ is a sequence of pairwise disjoint measurable sets 
and $(x_{n})$ is a sequence in~$X$. 
The inequality ``$\geq$'' in~\eqref{eqn:pFormula} is obvious if $f=0$ $\mu$-a.e. Otherwise,
we have $\sum_{n=1}^N \|x_n\|^p\mu(A_n)>0$ for large enough $N\in \Nat$. Fix such an~$N$ and define
$$
	\alpha_i:=
	\begin{cases}
	\|x_i\|^{p-1}\Big(\sum_{n=1}^N \|x_n\|^p \mu(A_n)\Big)^{-1/p'} & \text{if $x_i\neq 0$} \\
	0 & \text{if $x_i= 0$}
	\end{cases}
$$
for all $i=1,\dots,n$. It is easy to see that $g=\sum_{i=1}^N \alpha_i\chi_{A_i}$ belongs to~$S_{L^{p'}(\mu)}$.
Therefore
$$
	\big|\nu_f\big|_p(\Omega)\geq \sum_{i=1}^N |\alpha_i|\big\|\nu_f(A_i)\big\|_{X^{**}}=\sum_{i=1}^N |\alpha_i|\|x_i\|\mu(A_i)=
	\Big(\sum_{n=1}^N  \|x_n\|^p\mu(A_n)\Big)^{1/p},
$$
and by taking limits when $N\to\infty$ we obtain $\big|\nu_f\big|_p(\Omega)\geq \big(\int_{\Omega}   \|f(\cdot)\|^p_{X} \, d\mu\big)^{1/p}$.

{\em General case}. Fix $\varepsilon>0$. Lemma~\ref{lem:countably-valued} ensures the existence of a sequence $(A_{n})$ of pairwise disjoint measurable sets 
and a sequence $(x_{n})$ in~$X$ such that the function $g:=\sum_{n=1}^{\infty}x_{n}\chi_{A_{n}}$ satisfies
$\|f(\cdot)-g(\cdot)\|_X \leq \epsilon$ $\mu$-a.e. Since the functions $f$ and $f-g$ are $p$-Dunford integrable,
so is~$g$, with $\nu_{g}=\nu_f-\nu_{f-g}$. On one hand, \eqref{eqn:first} applied to~$f-g$ yields
\begin{equation}\label{eqn:second}
	\big|\nu_g\big|_p(\Omega)\leq \big|\nu_{f-g}\big|_p(\Omega)+\big|\nu_f\big|_p(\Omega)
	\leq \varepsilon+\big|\nu_f\big|_p(\Omega).
\end{equation}
On the other hand, we can apply the {\em Particular case} to $g$, so that
$$ 
	\Big(\int_{\Omega}   \|g(\cdot)\|^p_{X} \, d\mu\Big)^{1/p}\leq \big|\nu_g\big|_p(\Omega)
	\stackrel{\eqref{eqn:second}}{\leq} \varepsilon+\big|\nu_f\big|_p(\Omega).
$$
In particular, $g$ is $p$-Bochner integrable, and so does $f=(f-g)+g$. Moreover, 
\begin{eqnarray*}
	\Big(\int_{\Omega}   \|f(\cdot)\|^p_{X} \, d\mu\Big)^{1/p}&\leq& \Big(\int_{\Omega}   \|f(\cdot)-g(\cdot)\|^p_{X} \, d\mu\Big)^{1/p}+
	\Big(\int_{\Omega}   \|g(\cdot)\|^p_{X} \, d\mu\Big)^{1/p}\\
	&\leq&2\varepsilon+\big|\nu_f\big|_p(\Omega).
\end{eqnarray*}
As $\varepsilon>0$ is arbitrary, $\big|\nu_f\big|_p(\Omega)\geq \big(\int_{\Omega}   \|f(\cdot)\|^p_{X} \, d\mu\big)^{1/p}$.
The proof is finished.
\end{proof}

\begin{definition}\label{definition:pSemiVariation}
Let $Z$ be a Banach space. The {\em total $p$-semivariation} of~$\nu\in V(\mu,Z)$ is defined by
$$
	\|\nu\|_p(\Omega):=\sup\big\{|\langle \nu,z^*\rangle|_p(\Omega): \, z^*\in B_{Z^*}\big\},
$$
where $\langle \nu,z^*\rangle \in V(\mu,\mathbb{R})$ stands for the composition of $z^*$ and~$\nu$.  
We say that $\nu$ has {\em bounded $p$-semivariation} if $\|\nu\|_p(\Omega)$ is finite.
\end{definition}

\begin{remark}\label{remark:NormingSemivariation}
Let $Z$ be a Banach space and $\nu\in V(\mu,Z)$. If $\Delta$ is any $w^*$-dense subset of~$B_{Z^*}$, then
$\|\nu\|_p(\Omega)=\sup\big\{|\langle \nu,z^*\rangle|_p(\Omega): z^*\in \Delta\big\}$.
\end{remark}
\begin{proof}
Write $\gamma:=\sup\big\{|\langle \nu,z^*\rangle|_p(\Omega): z^*\in \Delta\big\}$. Fix $\epsilon>0$ and
$z^*\in B_{Z^*}$. Take $\mathcal{P}\in \Pi$ and $g\in B_{L^{p'}(\mu)}$ of the form 
$g=\sum_{A\in \mathcal{P}} \alpha_A \chi_{A}$, where $\alpha_A\in \mathbb R$. Since $\Delta$ is $w^*$-dense in~$B_{Z^*}$,
there is $z_0^*\in \Delta$ such that
$$
	\sum_{A\in \mathcal{P}} |\alpha_A| \big|\langle \nu(A),z^* \rangle\big|\leq
	\sum_{A\in \mathcal{P}} |\alpha_A| \big|\langle \nu(A),z_0^* \rangle\big| + \epsilon
	\leq |\langle \nu,z_0^*\rangle|_p(\Omega)+\epsilon \leq \gamma + \epsilon.
$$
It follows that $|\langle \nu,z^*\rangle|_p(\Omega)\leq \gamma+\epsilon$. As $\epsilon$ and $z^*$ are arbitrary, $\|\nu\|_p(\Omega)=\gamma$.
\end{proof}

\begin{corollary}\label{corollary:semiDunford}
If $f:\Omega \to X$ is $p$-Dunford integrable, then $\nu_f\in V(\mu,X^{**})$ has bounded $p$-semivariation
and $\|\nu_f\|_p(\Omega)=\|f\|_{\mathscr D^p(\mu,X)}$.
\end{corollary}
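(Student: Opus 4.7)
The plan is to reduce the claim to the scalar-valued instance of Theorem~\ref{theorem:pVariation} applied one $x^*$ at a time. For each $x^* \in B_{X^*}$, set $h_{x^*} := \langle f, x^* \rangle$, which lies in $L^p(\mu)$ by the very definition of $p$-Dunford integrability. As a measurable real-valued function, $h_{x^*}$ is automatically strongly measurable and $p$-Dunford integrable as a map $\Omega \to \mathbb{R}$. Moreover, its indefinite integral $A \mapsto \int_A h_{x^*}\, d\mu$ coincides with the scalar measure $\langle \nu_f, x^*\rangle$, since
$$
\langle \nu_f, x^*\rangle(A) = \langle \nu_f(A), x^* \rangle = \int_A \langle f, x^* \rangle \, d\mu
$$
for every $A \in \Sigma$ by definition of $\nu_f$. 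Applying Theorem~\ref{theorem:pVariation} to $h_{x^*}$ (with $X=\mathbb{R}$) therefore yields
$$
\bigl|\langle \nu_f, x^*\rangle\bigr|_p(\Omega) \;=\; \Big( \int_\Omega |\langle f, x^* \rangle|^p \, d\mu \Big)^{1/p}.
$$

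Taking the supremum over $x^* \in B_{X^*}$ and invoking Definition~\ref{definition:pSemiVariation} together with the description of $\|f\|_{\mathscr{D}_p(\mu,X)}$ recorded in Remark~\ref{remark:basic-pDunford}(ii), we conclude
$$
\|\nu_f\|_p(\Omega) \;=\; \sup_{x^* \in B_{X^*}} \Big( \int_\Omega |\langle f, x^* \rangle|^p \, d\mu \Big)^{1/p} \;=\; \|f\|_{\mathscr{D}_p(\mu,X)},
$$
which is finite by Remark~\ref{remark:basic-pDunford}(ii). There is no real obstacle here: the content of the corollary is essentially the Riesz-type duality statement that, for a scalar $h \in L^p(\mu)$, the total $p$-variation of the indefinite integral of $h$ equals $\|h\|_{L^p(\mu)}$, and this scalar case is already subsumed by Theorem~\ref{theorem:pVariation}.
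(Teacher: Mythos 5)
Your reduction to the scalar case of Theorem~\ref{theorem:pVariation} is exactly the paper's first step and is fine: for each $x^*\in X^*$ the measure $\langle \nu_f,x^*\rangle$ is the indefinite integral of $\langle f,x^*\rangle\in L^p(\mu)$, so $|\langle \nu_f,x^*\rangle|_p(\Omega)=\|\langle f,x^*\rangle\|_{L^p(\mu)}$. The gap is in the last step, where you ``invoke Definition~\ref{definition:pSemiVariation}'' to identify $\|\nu_f\|_p(\Omega)$ with the supremum over $x^*\in B_{X^*}$. That is not what the definition says here: $\nu_f$ takes values in $Z:=X^{**}$, so by Definition~\ref{definition:pSemiVariation}
$$
\|\nu_f\|_p(\Omega)=\sup\big\{|\langle \nu_f,z^*\rangle|_p(\Omega):\, z^*\in B_{X^{***}}\big\},
$$
and the supremum runs over the whole ball of $X^{***}$, not just over the canonical copy of $B_{X^*}$ inside it. Your computation only controls the functionals coming from $X^*$, which gives the inequality $\|\nu_f\|_p(\Omega)\geq \|f\|_{\mathscr D^p(\mu,X)}$ for free but leaves the reverse inequality (and in particular the finiteness of the $p$-semivariation) unproved.

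The missing ingredient is precisely Remark~\ref{remark:NormingSemivariation}: for $\nu\in V(\mu,Z)$ and any $w^*$-dense subset $\Delta$ of $B_{Z^*}$, one has $\|\nu\|_p(\Omega)=\sup\{|\langle\nu,z^*\rangle|_p(\Omega): z^*\in\Delta\}$. Applying this with $Z=X^{**}$ and $\Delta=B_{X^*}$, which is $w^*$-dense in $B_{X^{***}}$ by Goldstine's theorem, closes the gap and recovers exactly the paper's argument. So the fix is short, but as written your proof skips the one step that is not purely formal.
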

\begin{proof} For each $x^*\in X^*$ the composition $\langle \nu_f,x^* \rangle$
is the indefinite integral of $\langle f,x^*\rangle \in L^p(\mu)$, hence
$|\langle \nu_f,x^* \rangle|_p(\Omega)=\|\langle f,x^*\rangle\|_{L^p(\mu)}$
(apply Theorem~\ref{theorem:pVariation} in the real-valued case to $\langle f,x^* \rangle$).
Now, Remark~\ref{remark:NormingSemivariation} applied to~$\nu_f$ with $Z:=X^{**}$ and $\Delta:=B_{X^*}$
(which is $w^*$-dense in~$B_{X^{***}}$ by Goldstine's theorem) ensures that 
\begin{multline*}
	\|\nu_f\|_p(\Omega)=\sup\big\{|\langle \nu_f,x^*\rangle|_p(\Omega): \, x^*\in B_{X^*}\big\} \\ =
	\sup\big\{\|\langle f,x^*\rangle\|_{L^p(\mu)}: \, x^*\in B_{X^*}\big\}=\|f\|_{\mathscr D^p(\mu,X)}.
\end{multline*}
\end{proof}

Throughout the rest of this section $Y$ is a Banach space. 
Recall that an operator $u:X\to Y$ is said to be {\em $p$-summing} if there exists a constant $K\geq 0$ such that
$$
	\Big(
	\sum_{i=1}^n \|u(x_i)\|_Y^p
	\Big)^{1/p}
	\leq K
	\sup\left\{
	\Big(
	\sum_{i=1}^n \big|\langle x_i,x^*\rangle \big|^p
	\Big)^{1/p}
	: \,
	x^*\in B_{X^*}
	\right\},
$$
for every $n\in \Nat$ and every $x_1,\ldots,x_n\in X$.
The least constant $K$ satisfying this condition is usually denoted by $\pi_p(u)$.

Our results on the composition of $p$-summing operators and $p$-Dunford integrable functions  (Theorems~\ref{theorem:pSumming-strong}
and~\ref{theorem:pSumming-scalar} below) will be obtained with the help of Theorem~\ref{theorem:pVariation} and the following two easy lemmas.
 
\begin{lemma}\label{lemma:Biadjoint}
Let $u:X\to Y$ be an operator and $f:\Omega \to X$ a $p$-Dunford integrable function. Then $u \circ f$ is $p$-Dunford integrable
and $\nu_{u\circ f}=u^{**}\circ \nu_f$.
\end{lemma}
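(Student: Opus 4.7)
The plan is to verify both assertions by straightforward duality computations, since the lemma essentially unpacks how composition with~$u$ interacts with the scalar evaluations that define $p$-Dunford integrability and the Dunford measure.

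First I would establish $p$-Dunford integrability of $u\circ f$. For every $y^*\in Y^*$, the composition with the adjoint gives
\[
\langle u\circ f(\omega),y^*\rangle = \langle f(\omega),u^*(y^*)\rangle \quad \text{for all }\omega\in\Omega,
\]
so $\langle u\circ f,y^*\rangle = \langle f,u^*(y^*)\rangle$ pointwise. Since $u^*(y^*)\in X^*$ and $f$ is $p$-Dunford integrable, the right-hand side belongs to $L^p(\mu)$, hence so does $\langle u\circ f,y^*\rangle$. This shows $u\circ f$ is $p$-Dunford integrable, so in particular the measure $\nu_{u\circ f}\in V(\mu,Y^{**})$ is well defined (via Remark~\ref{remark:basic-pDunford}(i), since $p$-Dunford implies Dunford).

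Next I would identify $\nu_{u\circ f}$ with $u^{**}\circ \nu_f$. Fix $A\in \Sigma$ and $y^*\in Y^*$. Using the definition of the Dunford measure twice and the identity displayed above,
\[
\langle \nu_{u\circ f}(A), y^*\rangle = \int_A \langle u\circ f, y^*\rangle \, d\mu = \int_A \langle f, u^*(y^*)\rangle \, d\mu = \langle \nu_f(A), u^*(y^*)\rangle = \langle u^{**}(\nu_f(A)), y^*\rangle.
\]
As $y^*\in Y^*$ is arbitrary and $Y^*$ separates points of~$Y^{**}$, this forces $\nu_{u\circ f}(A)=u^{**}(\nu_f(A))$ for every $A\in\Sigma$.

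There is essentially no obstacle here: the statement is a soft consequence of the fact that $u^*$ takes $Y^*$ into $X^*$ and that $u^{**}$ is the natural lift of~$u$ to the biduals. The only thing worth being careful about is applying the defining property of the Dunford measure to both $f$ (acting on $u^*(y^*)\in X^*$) and to $u\circ f$ (acting on $y^*\in Y^*$), which is exactly what makes the chain of equalities close up.
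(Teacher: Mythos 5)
Your proof is correct and follows essentially the same route as the paper: the identity $\langle u\circ f,y^*\rangle=\langle f,u^*(y^*)\rangle$ gives the $p$-Dunford integrability (which the paper simply calls obvious), and the same chain of four equalities, together with the fact that $Y^*$ separates the points of $Y^{**}$, yields $\nu_{u\circ f}=u^{**}\circ\nu_f$. Nothing is missing.
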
 
\begin{proof} The first statement is obvious. On the other hand, for every $A\in\Sigma$ and $y^*\in Y^*$ we have
\begin{multline*}
 	\left\langle \nu_{u\circ f}(A),y^*\right\rangle= 
 	\int_A  \langle u\circ f,y^*\rangle \, d\mu= \int_A  \langle f,u^*(y^*)\rangle \, d\mu
 	\\ =\langle   \nu_f(A),u^*(y^*) \rangle=
 	\big\langle (u^{**}\circ \nu_f)(A),y^*\big\rangle,
\end{multline*}
and so $\nu_{u\circ f}=u^{**}\circ \nu_f$.
\end{proof}
 
\begin{lemma}\label{lemma:pSumming-measures}
Let $u:X\to Y$ be a $p$-summing operator. If $\nu\in V(\mu,X)$ has bounded $p$-semivariation, then 
$u\circ\nu\in V(\mu,Y)$ has bounded $p$-variation and 
$$
	|u\circ \nu|_p(\Omega)\leq \pi_p(u) \|\nu\|_p(\Omega).
$$
\end{lemma}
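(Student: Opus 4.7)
The plan is to apply the $p$-summing inequality, in its defining form, to a carefully scaled finite tuple of vectors built from the measure $\nu$ and a finite partition $\mathcal{P}\in\Pi$ of $\Omega$. Concretely, for each $A\in\mathcal{P}$ with $\mu(A)>0$ I would set $x_A:=\mu(A)^{-(p-1)/p}\nu(A)\in X$; atoms with $\mu(A)=0$ satisfy $\nu(A)=0$ since $\nu\in V(\mu,X)$, so together with the convention $\tfrac{0}{0^{p-1}}=0$ they can be safely discarded from all sums.

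Next, I would feed the finite system $(x_A)_{A\in\mathcal P}$ into the inequality defining $\pi_p(u)$. The left-hand side $\bigl(\sum_{A\in\mathcal P}\|u(x_A)\|_Y^p\bigr)^{1/p}$ is, by construction, exactly $\bigl(\sum_{A\in\mathcal P}\|u(\nu(A))\|_Y^p/\mu(A)^{p-1}\bigr)^{1/p}$, which is the $p$-variation sum for $u\circ\nu$ over $\mathcal{P}$ appearing in Definition~\ref{definition:pVariation}. For the right-hand side, for any $x^*\in B_{X^*}$ I have
$$\sum_{A\in\mathcal P}\bigl|\langle x_A,x^*\rangle\bigr|^p=\sum_{A\in\mathcal P}\frac{|\langle\nu(A),x^*\rangle|^p}{\mu(A)^{p-1}}\leq |\langle\nu,x^*\rangle|_p(\Omega)^{p},$$
where the last bound uses the definition of $|\langle\nu,x^*\rangle|_p(\Omega)$ applied to the scalar measure $\langle\nu,x^*\rangle\in V(\mu,\mathbb{R})$ (one needs the first description of the $p$-variation in Definition~\ref{definition:pVariation}). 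Taking the supremum over $x^*\in B_{X^*}$ and invoking Definition~\ref{definition:pSemiVariation} bounds this further by $\|\nu\|_p(\Omega)^p$.

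Putting the two sides together yields $\bigl(\sum_{A\in\mathcal P}\|u(\nu(A))\|_Y^p/\mu(A)^{p-1}\bigr)^{1/p}\leq\pi_p(u)\,\|\nu\|_p(\Omega)$ for every $\mathcal{P}\in\Pi$; taking the supremum over $\mathcal{P}$ gives the desired estimate $|u\circ\nu|_p(\Omega)\leq\pi_p(u)\,\|\nu\|_p(\Omega)$, which in particular forces $u\circ\nu$ to have bounded $p$-variation. I do not anticipate any real obstacle: the proof is essentially a bookkeeping exercise and the only subtlety is the correct choice of the scaling exponent $(p-1)/p$, together with the minor care needed for zero-measure atoms.
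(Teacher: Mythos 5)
Your proposal is correct and is essentially identical to the paper's own proof: the scaling $x_A:=\mu(A)^{-(p-1)/p}\nu(A)$ is the same as the paper's $\nu(A)/\mu(A)^{1/p'}$ since $1/p'=(p-1)/p$, and the remaining steps (applying the $p$-summing inequality, bounding the right-hand side via the scalar $p$-variation and taking suprema) coincide. The explicit remark about discarding sets of measure zero is a minor point the paper handles implicitly via its stated convention.
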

\begin{proof}
For any $\mathcal{P}\in \Pi$ we have
$$
  \left(\sum_{A\in \mathcal{P}}  \frac {\|u(\nu(A))\|_Y^p}{\mu(A)^{p-1}}\right)^{1/p}=
  \left(\sum_{A\in \mathcal{P}}  \left\|u\left(\frac{\nu(A)}{\mu(A)^{1/p'}}\right)\right\|_Y^p\right)^{1/p}
$$
$$
	\leq \pi_p(u) \sup\left\{\left(\sum_{A\in \mathcal{P}} \left|\left\langle \frac{\nu(A)}{\mu(A)^{1/p'}},x^*\right\rangle\right|^p\right)^{1/p}: \, x^*\in B_{X^*}\right\}
$$
$$
	= \pi_p(u) \sup\left\{\left(\sum_{A\in \mathcal{P}} \frac{|\langle \nu(A),x^*\rangle|^p}{\mu(A)^{p-1}}\right)^{1/p}: \, x^*\in B_{X^*}\right\}
\leq \pi_p(u) \|\nu\|_p(\Omega).
$$
\end{proof}

\begin{theorem}\label{theorem:pSumming-strong}
Let $u:X\to Y$ be a $p$-summing operator and $f:\Omega\to X$ a $p$-Dunford integrable function such that
$u\circ f$ is strongly measurable. Then $u\circ f$ is $p$-Bochner integrable and
$\|u\circ f\|_{L^p(\mu,Y)}\leq \pi_p(u)\|f\|_{\mathscr{D}^p(\mu,X)}$. 
\end{theorem}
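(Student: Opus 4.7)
The plan is to assemble the preceding preparatory results in the natural order. First, Lemma~\ref{lemma:Biadjoint} tells us that $u\circ f$ is $p$-Dunford integrable with $\nu_{u\circ f}=u^{**}\circ \nu_f$, and Corollary~\ref{corollary:semiDunford} tells us that $\nu_f\in V(\mu,X^{**})$ has bounded $p$-semivariation equal to $\|f\|_{\mathscr D^p(\mu,X)}$. The strategy is then to show that $\nu_{u\circ f}$ has bounded $p$-\emph{variation} and to read off both the $p$-Bochner integrability of~$u\circ f$ and the norm estimate from Theorem~\ref{theorem:pVariation}.

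To pass from bounded $p$-semivariation of~$\nu_f$ to bounded $p$-variation of~$\nu_{u\circ f}$, I would apply Lemma~\ref{lemma:pSumming-measures} to the biadjoint~$u^{**}$. For this I will invoke the classical fact that if $u:X\to Y$ is $p$-summing then so is $u^{**}:X^{**}\to Y^{**}$, with $\pi_p(u^{**})=\pi_p(u)$, a standard consequence of Pietsch's factorization theorem (or of the principle of local reflexivity). Lemma~\ref{lemma:pSumming-measures} then yields
$$
	|\nu_{u\circ f}|_p(\Omega) \;=\; |u^{**}\circ \nu_f|_p(\Omega) \;\leq\; \pi_p(u^{**})\,\|\nu_f\|_p(\Omega) \;=\; \pi_p(u)\,\|f\|_{\mathscr D^p(\mu,X)}.
$$

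Finally, since $u\circ f$ is strongly measurable by hypothesis and $p$-Dunford integrable by the first step, Theorem~\ref{theorem:pVariation} applied to $u\circ f:\Omega\to Y$ gives $|\nu_{u\circ f}|_p(\Omega) = \bigl(\int_\Omega \|(u\circ f)(\omega)\|_Y^p\, d\mu\bigr)^{1/p}$. Combining this equality with the previous estimate shows that $\|u\circ f\|_Y \in L^p(\mu)$ (so that $u\circ f$ is $p$-Bochner integrable) and that $\|u\circ f\|_{L^p(\mu,Y)}\leq \pi_p(u)\|f\|_{\mathscr D^p(\mu,X)}$, as required.

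The only non-routine point I anticipate is the $p$-summing property of~$u^{**}$ with the same constant as~$u$; the remaining steps are direct chainings of results already proven. Should one prefer a self-contained argument, the biadjoint fact can be replaced by a direct approximation: each value $\nu_f(A)\in X^{**}$ is a $w^*$-limit of a bounded net in~$X$ by Goldstine's theorem, so one can apply the $p$-summing inequality for~$u$ to such approximants and pass to the limit using $w^*$-lower semicontinuity of norms.
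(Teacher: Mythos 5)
Your proposal is correct and follows essentially the same route as the paper's own proof: Lemma~\ref{lemma:Biadjoint} and Corollary~\ref{corollary:semiDunford}, then Lemma~\ref{lemma:pSumming-measures} applied to $u^{**}$ (using $\pi_p(u^{**})=\pi_p(u)$, which the paper cites from Proposition~2.19 of Diestel--Jarchow--Tonge), and finally Theorem~\ref{theorem:pVariation} applied to $u\circ f$. No gaps.
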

\begin{proof}
We know that $\nu_f\in V(\mu,X^{**})$ satisfies
$\|\nu_f\|_p(\Omega)=\|f\|_{\mathscr{D}^p(\mu,X)}<\infty$ (Corollary~\ref{corollary:semiDunford}).
On the other hand, the $p$-summability of~$u$ guarantees that of~$u^{**}$ (and
in fact $\pi_p(u)=\pi_p(u^{**})$), see e.g. \cite[Proposition~2.19]{die-alt}. 
So Lemma~\ref{lemma:pSumming-measures} applied to $u^{**}$ and $\nu_f$ ensures
that $u^{**}\circ \nu_f$ has bounded $p$-variation and, moreover, $\big|u^{**}\circ \nu_f\big|_p(\Omega) \leq \pi_p(u)\|\nu_f\|_p(\Omega)$. 

Observe that $u\circ f$ is strongly measurable, $p$-Dunford integrable and satisfies 
$\nu_{u\circ f}=u^{**}\circ \nu_f$ (Lemma~\ref{lemma:Biadjoint}). Therefore, Theorem~\ref{theorem:pVariation} applied to $u\circ f$ 
tells us that $u\circ f$ is $p$-Bochner integrable and  
$$
 	\|u\circ f\|_{L^p(\mu,Y)} = \big|\nu_{u\circ f}\big|_p(\Omega)=\big|u^{**}\circ \nu_f\big|_p(\Omega) 
 	\leq \pi_p(u) \|\nu_f\|_p(\Omega) =\pi_p(u)\|f\|_{\mathscr{D}^p(\mu,X)},
$$
as we wanted to prove.
\end{proof}

The celebrated Pietsch factorization theorem (see e.g. \cite[2.13]{die-alt}) states that for every $p$-summing operator
$u:X \to Y$ there is a regular Borel probability measure $\eta$ on $(B_{X^*},w^*)$
such that $u$ factors as 
$$
	\xymatrix{ X \ar[rr]^{u} \ar[d]_{i}  & & Y \\
	i(X)\ar[rr]^{j}&  & Z \ar[u]_{\hat{u}}}
$$
where: 
\begin{itemize}
\item $i$ is the canonical isometric embedding of~$X$ in the Banach space $C(B_{X^*})$
of all real-valued continuous functions on $(B_{X^*},w^*)$ (which is given by $i(x)(x^*):=x^*(x)$ for every $x\in X$ and $x^*\in B_{X^*}$);
\item $Z$ is a subspace of~$L^p(\eta)$;
\item $j$ is the restriction of the identity operator which maps each element of $C(B_{X^*})$ to its equivalence class in~$L^p(\eta)$; 
\item $\hat{u}$ is an operator. 
\end{itemize}
Note that $u(X)$ is separable whenever $Z$ is separable. Therefore,
in this case, for every scalarly measurable function $f:\Omega \to X$ the composition $u\circ f$ is strongly measurable. 

The previous discussion and Theorem~\ref{theorem:pSumming-strong} lead to Corollary~\ref{corollary:pSumming-strong} below. 
Recall that a probability measure $\eta$ (defined on a $\sigma$-algebra) is said to be {\em separable} if its measure algebra equipped with the Fr\'{e}chet-Nikod\'{y}m metric
is separable or, equivalently, if $L^p(\eta)$ is separable for some/all $1\leq p<\infty$.
We say that a compact Hausdorff topological space $K$ belongs to the class~$MS$ if every regular Borel
probability measure on~$K$ is separable.

\begin{corollary}\label{corollary:pSumming-strong}
Suppose $(B_{X^*},w^*)$ belongs to the class~$MS$. Let $u:X\to Y$ be a $p$-summing operator. If $f:\Omega\to X$ is $p$-Dunford integrable,
then $u\circ f$ is $p$-Bochner integrable and
$\|u\circ f\|_{L^p(\mu,Y)}\leq \pi_p(u)\|f\|_{\mathscr{D}^p(\mu,X)}$. 
\end{corollary}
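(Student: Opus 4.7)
The plan is to reduce the statement to Theorem~\ref{theorem:pSumming-strong} by showing that the hypothesis $(B_{X^*},w^*)\in MS$ forces $u\circ f$ to be strongly measurable for every $p$-Dunford integrable function $f:\Omega\to X$. Once this is established, Theorem~\ref{theorem:pSumming-strong} applied to $u$ and $f$ immediately delivers both the $p$-Bochner integrability of $u\circ f$ and the norm estimate $\|u\circ f\|_{L^p(\mu,Y)}\leq \pi_p(u)\|f\|_{\mathscr{D}^p(\mu,X)}$.

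To produce the strong measurability of $u\circ f$, I would invoke the Pietsch factorization theorem recalled in the paragraph preceding the statement. Let $\eta$ be a regular Borel probability measure on $(B_{X^*},w^*)$ for which $u$ factors through a subspace $Z \sub L^p(\eta)$. The assumption that $(B_{X^*},w^*)$ lies in the class $MS$ means that every such $\eta$ is separable, hence $L^p(\eta)$ is separable, and consequently so is its subspace~$Z$. From the factorization diagram $u=\hat{u}\circ j\circ i$ one reads off that $u(X)\sub \hat{u}(Z)$, so $u(X)$ is separable as well.

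It remains to observe that any $p$-Dunford integrable $f:\Omega\to X$ is scalarly measurable, so $u\circ f:\Omega\to Y$ is scalarly measurable too (since for any $y^*\in Y^*$ one has $\langle u\circ f,y^*\rangle=\langle f,u^*(y^*)\rangle$). Because $u\circ f$ takes values in the separable subspace $\overline{u(X)}\sub Y$, Pettis' measurability theorem (recalled in Section~\ref{section:preliminaries}) gives the strong measurability of $u\circ f$. Theorem~\ref{theorem:pSumming-strong} then finishes the proof.

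No substantial technical obstacle is anticipated here: the entire content is the transfer of the $MS$ hypothesis to separability of $u(X)$ via Pietsch factorization, which is essentially the remark already made in the paragraph preceding the corollary; the rest is bookkeeping with Pettis' measurability theorem and a direct appeal to Theorem~\ref{theorem:pSumming-strong}.
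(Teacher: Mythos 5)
Your proposal is correct and follows exactly the route the paper intends: the class~$MS$ hypothesis makes the Pietsch control measure $\eta$ separable, hence $Z\sub L^p(\eta)$ and therefore $u(X)$ are separable, so $u\circ f$ is scalarly measurable with essentially separable range and thus strongly measurable by Pettis' measurability theorem, after which Theorem~\ref{theorem:pSumming-strong} applies verbatim. This is precisely the ``previous discussion'' the paper cites just before the corollary, so there is nothing to add.
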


The class $MS$ is rather wide and contains all compact spaces which are Eberlein, Rosenthal, weakly Radon-Nikod\'{y}m,
linearly ordered, etc. From the Banach space point of view, $(B_{X^*},w^*)$ belongs to the class~$MS$ whenever
$X$ is weakly countably determined, weakly precompactly generated (e.g. $X\not\supseteq \ell^1$), etc.
For more information on the class~$MS$ we refer to \cite[Section~3.1]{rod3} and the references therein.
Some recent works on this topic are \cite{avi-mar-ple,mar-ple:12,ple-sob}.

\begin{remark}\label{remark:SeparableRange}
One of the consequences of the aforementioned Pietsch theorem is that every $p$-summing operator is {\em completely continuous}, see e.g. \cite[Theorem~2.17]{die-alt}, i.e.
it maps weakly compact sets to norm compact sets. 
{\em If $X$ is weakly precompactly generated, then every completely continuous
operator $u:X\to Y$ has separable range.} Indeed, $X=\overline{{\rm span}}(G)$ for some weakly precompact set $G \sub X$ and so
$u(X) \sub \overline{{\rm span}}(u(G))$, where $u(G)$ is relatively norm compact (hence separable).
The same assertion holds if $X$ is weakly countably determined, but in this case
the proof is more involved, see \cite[Theorem~7.1]{tal1}.
\end{remark}

In the following result (which is stronger than Theorem~\ref{theorem:pSumming-strong}) we remove the strong measurability condition 
for $u\circ f$ at the cost of obtaining a weaker conclusion.

\begin{theorem}\label{theorem:pSumming-scalar}
Let $u:X\to Y$ be a $p$-summing operator and $f:\Omega\to X$ a $p$-Dunford integrable function. Then
$u\circ f$ is scalarly equivalent to a $p$-Bochner integrable function $g:\Omega \to Y$ and 
$\|g\|_{L^p(\mu,Y)}\leq \pi_p(u) \|f\|_{\mathscr{D}^p(\mu,X)}$.
\end{theorem}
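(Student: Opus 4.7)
The approach is to route the problem through the Pietsch factorization of~$u$, reinterpret the resulting data as a vector measure in an $L^{p}$-factor, and then apply a Radon--Nikod\'ym-type extraction, pushing the density back through the factorization.

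First, apply the Pietsch factorization recalled just before the statement: write $u=\hat u\circ v$ where $v:=j\circ i\colon X\to L^{p}(\eta)$, put $Z:=\overline{v(X)}\subseteq L^{p}(\eta)$, and note that $\|\hat u\|\le\pi_{p}(u)$. The canonical $j\colon C(K)\to L^{p}(\eta)$ is $p$-summing with $\pi_{p}(j)\le 1$ (test at Dirac masses), so $v$ is itself $p$-summing, hence weakly compact (Remark~\ref{remark:SeparableRange} and \cite[Theorem~2.17]{die-alt}); in particular $v^{**}(X^{**})\subseteq Z$.

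Next, consider the vector measure $\nu:=\nu_{v\circ f}=v^{**}\circ\nu_{f}\colon\Sigma\to Z$ given by Lemma~\ref{lemma:Biadjoint}. By Corollary~\ref{corollary:semiDunford}, $\|\nu_{f}\|_{p}(\Omega)=\|f\|_{\mathscr{D}^{p}(\mu,X)}$, and Lemma~\ref{lemma:pSumming-measures} applied to $v^{**}$ (using $\pi_{p}(v^{**})=\pi_{p}(v)\le 1$, see \cite[Proposition~2.19]{die-alt}) shows that $\nu$ has bounded $p$-variation with $|\nu|_{p}(\Omega)\le\|f\|_{\mathscr{D}^{p}(\mu,X)}$. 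Scalar countable additivity of $\nu$ against $Z^{*}$ combined with Orlicz--Pettis yields countable additivity of $\nu$ in the norm of~$Z$.

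The crux is then to produce a strongly measurable, $p$-Bochner integrable density $\tilde g\colon\Omega\to Z$ with $\nu_{\tilde g}=\nu$ and $\|\tilde g\|_{L^{p}(\mu,Z)}\le|\nu|_{p}(\Omega)$. When $p>1$ this is immediate: $L^{p}(\eta)$ is reflexive, hence $Z$ has the Radon--Nikod\'ym property, and the countably additive, bounded $p$-variation, $\mu$-continuous measure $\nu$ admits such a density by the Lebesgue--Bochner duality; Theorem~\ref{theorem:pVariation} applied to~$\tilde g$ identifies $\|\tilde g\|_{L^{p}(\mu,Z)}$ with $|\nu_{\tilde g}|_{p}(\Omega)=|\nu|_{p}(\Omega)$. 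For $p=1$ the argument is more delicate, since $L^{1}(\eta)$ typically fails RNP; this case is handled by invoking the Lewis/Rodr\'iguez theorem \cite[Theorem~2.3]{rod3} cited in the introduction, applied to the $1$-summing operator~$v$ (whose range lies in $L^{1}(\eta)$), which produces the required Bochner-integrable $\tilde g$ scalarly equivalent to $v\circ f$.

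Finally, set $g:=\hat u\circ\tilde g\colon\Omega\to Y$. Strong measurability and $p$-Bochner integrability are inherited from~$\tilde g$, and
\[
\|g\|_{L^{p}(\mu,Y)}\le\|\hat u\|\cdot\|\tilde g\|_{L^{p}(\mu,Z)}\le\pi_{p}(u)\,\|f\|_{\mathscr{D}^{p}(\mu,X)}.
\]
For scalar equivalence with $u\circ f$, Lemma~\ref{lemma:Biadjoint} gives the chain
\[
\nu_{g}=\hat u^{**}\circ\nu_{\tilde g}=\hat u\circ\nu=\hat u\circ v^{**}\circ\nu_{f}=u^{**}\circ\nu_{f}=\nu_{u\circ f},
\]
which expresses the equality $\mu$-a.e.\ of $\langle g,y^{*}\rangle$ and $\langle u\circ f,y^{*}\rangle$ for every $y^{*}\in Y^{*}$.

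The main obstacle is the Radon--Nikod\'ym step. When $p>1$ reflexivity of $L^{p}(\eta)$ trivialises it; the genuinely hard case is $p=1$, where $L^{1}(\eta)$ need not have RNP and one must fall back on the bespoke Lewis-type construction. This reflects the fact that the present statement is exactly the $p\ge 1$ unification of the classical $p=1$ Lewis/Rodr\'iguez theorem.
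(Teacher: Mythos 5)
Your argument is correct, but it follows a genuinely different route from the paper's. The paper does not factorize through the Pietsch measure at all: it observes that $u$, being $p$-summing, is weakly compact, so $\overline{u(X)}$ is weakly compactly generated, hence weakly Lindel\"{o}f and measure-compact, and then invokes Edgar's theorem to produce \emph{directly} a strongly measurable $g$ scalarly equivalent to $u\circ f$; after that, Lemma~\ref{lemma:Biadjoint}, Corollary~\ref{corollary:semiDunford}, Lemma~\ref{lemma:pSumming-measures} and Theorem~\ref{theorem:pVariation} give the $p$-Bochner integrability and the norm bound exactly as in your final steps. Your replacement of Edgar's theorem by the Pietsch factorization plus the Radon--Nikod\'{y}m property of the reflexive space $L^p(\eta)$ is a legitimate and arguably more operator-theoretic alternative for $p>1$ (all the auxiliary verifications you need --- $\pi_p(j)\le 1$, Gantmacher to land $v^{**}$ in $Z$, Orlicz--Pettis for countable additivity, bounded variation of $\nu$ via H\"{o}lder from bounded $p$-variation, and the identification $\langle \tilde g,z^*\rangle=\langle f,v^*z^*\rangle$ a.e.\ to see that $\tilde g$ is $p$-Dunford integrable before applying Theorem~\ref{theorem:pVariation} --- do go through). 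What you lose is uniformity in $p$: for $p=1$ the space $L^1(\eta)$ generally fails the RNP and you must import the Lewis/Rodr\'{i}guez theorem \cite[Theorem~2.3]{rod3}, i.e.\ precisely the $p=1$ instance of the statement being proved, so your proof is not self-contained there (you should also note explicitly that in that case the norm estimate still requires running $\tilde g$ through Theorem~\ref{theorem:pVariation}, since the cited result does not provide it). The paper's Edgar-based argument buys a single proof valid for all $1\le p<\infty$ at the cost of heavier measure-theoretic machinery; yours buys a more elementary argument for $p>1$ at the cost of an external citation for $p=1$.
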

\begin{proof}
Since $u$ is $p$-summing, it is also weakly compact (see e.g. \cite[Theorem~2.17]{die-alt})
and hence $Z:=\overline{u(X)}$ is weakly compactly generated. In particular, $(Z,w)$ is Lindel\"{o}f (see e.g. \cite[Theorem~14.31]{fab-ultimo})
and so measure-compact. A result
of Edgar (see~\cite[Proposition~5.4]{edgar1}, cf. \cite[Theorem 3-4-6]{tal})
ensures the existence of a strongly measurable function $g:\Omega\to Z$ such that
$u\circ f$ and $g$ are scalarly equivalent.

Since $u\circ f$ is $p$-Dunford integrable, the same holds for~$g$, with $\nu_g=\nu_{u\circ f}=u^{**}\circ \nu_f$ (Lemma~\ref{lemma:Biadjoint}). 
On the other hand, in a similar way as we did in Theorem~\ref{theorem:pSumming-strong}, we deduce that $\nu_g$ has bounded $p$-variation and
$\big|\nu_g\big|_p(\Omega)\leq \pi_p(u) \|f\|_{\mathscr{D}^p(\mu,X)}$. The result now follows from 
Theorem~\ref{theorem:pVariation} applied to~$g$.
\end{proof}

We finish this section by pointing out that in~\cite{pel-rue-san} a general approach is developed to obtain further results 
on the improvement of the integrability of a strongly measurable function by a summing operator.

\section{Testing $p$-Dunford integrability}\label{section:thick}

In this section we study the $p$-Dunford integrability of a function $f:\Omega \to X$
via the family of real-valued functions
$$
	Z_{f,\Gamma}=\big\{\langle f,x^* \rangle: \, x^*\in \Gamma\big\},
$$
for some $\Gamma \sub X^*$. To deal with our main result, Theorem~\ref{theorem:pDunford-thick}, 
we use some ideas from the proof of \cite[Theorem~9]{raj-rod}.

\begin{theorem}\label{theorem:pDunford-thick}
Suppose $X$ has the $\mu$-PIP. Let $f:\Omega \to X$
be a scalarly measurable function for which there is a $w^*$-thick set $\Gamma\sub X^*$ such that
$Z_{f,\Gamma}\sub L^p(\mu)$. Then $f$ is $p$-Dunford integrable.
\end{theorem}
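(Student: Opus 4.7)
The plan is to use the $w^*$-thickness of~$\Gamma$ to extract a subset on which $\|\langle f,x^*\rangle\|_{L^p(\mu)}$ is uniformly controlled and whose $w^*$-closed absolutely convex hull contains a ball of~$X^*$; the $\mu$-PIP will then be used to propagate this bound to the whole closure, from which $p$-Dunford integrability follows by scaling.

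For the initial reduction, set
\[
\Gamma_n:=\bigl\{x^*\in\Gamma:\|\langle f,x^*\rangle\|_{L^p(\mu)}\leq n\text{ and }\|x^*\|\leq n\bigr\},\quad n\in\Nat.
\]
Since $Z_{f,\Gamma}\sub L^p(\mu)$, the sequence $(\Gamma_n)$ is increasing with $\Gamma=\bigcup_n\Gamma_n$, so the $w^*$-thickness of~$\Gamma$ yields $n_0\in\Nat$ and $r>0$ with $\overline{{\rm aco}}^{w^*}(\Gamma_{n_0})\supseteq rB_{X^*}$. Minkowski's inequality in~$L^p(\mu)$ immediately gives $\|\langle f,x^*\rangle\|_{L^p(\mu)}\leq n_0$ for every $x^*\in{\rm aco}(\Gamma_{n_0})$.

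The main obstacle is to extend this uniform bound from ${\rm aco}(\Gamma_{n_0})$ to its $w^*$-closure. Once that is achieved, for any $y^*\in X^*$ the vector $ry^*/\|y^*\|$ lies in $rB_{X^*}\sub\overline{{\rm aco}}^{w^*}(\Gamma_{n_0})$, yielding $\|\langle f,y^*\rangle\|_{L^p(\mu)}\leq(n_0/r)\|y^*\|$ and hence $p$-Dunford integrability (with $\|f\|_{\mathscr{D}_p(\mu,X)}\leq n_0/r$). The difficulty is that, for $x^*\in\overline{{\rm aco}}^{w^*}(\Gamma_{n_0})$, one typically only has a $w^*$-convergent net $(x^*_\alpha)\sub{\rm aco}(\Gamma_{n_0})$, which produces just pointwise convergence $\langle f,x^*_\alpha\rangle\to\langle f,x^*\rangle$ on~$\Omega$; unlike the sequential case, net convergence does not by itself transmit the $L^p$-bound via Fatou or Vitali arguments.

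This is precisely where the $\mu$-PIP enters, following the template of \cite[Theorem~9]{raj-rod}. The $\mu$-PIP applied to scalarly bounded truncations of~$f$ (e.g. restrictions to measurable sets on which $\langle f,\cdot\rangle$ is uniformly bounded over~$\Gamma_{n_0}$) produces genuine Pettis integrals in~$X$, so that the scalar functionals $x^*\mapsto\int_A\langle f,x^*\rangle\,d\mu$ are well-defined and $w^*$-continuous on~$X^*$ for each $A\in\Sigma$. Testing against simple functions in $B_{L^{p'}(\mu)}$ and using the $L^p$--$L^{p'}$ duality, the uniform bound on ${\rm aco}(\Gamma_{n_0})$ transfers to the $w^*$-closure, completing the proof.
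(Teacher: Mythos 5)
Your proposal is correct and follows essentially the same route as the paper: both truncate $f$ to scalarly bounded pieces $f\chi_{E_m}$ covering $\Omega$ (which is where scalar measurability is used, via \cite[Proposition~3.1]{Musial}), invoke the $\mu$-PIP to make these truncations Pettis integrable, and exploit the resulting $w^*$-continuity to push the uniform $L^p$-bound from ${\rm aco}(\Gamma_{n_0})$ to its $w^*$-closure before scaling. The paper merely packages this closure step differently --- it proves that the absolutely convex sublevel sets $C_n=\{x^*:\|\langle f,x^*\rangle\|_{L^p(\mu)}\leq n\}$ are $w^*$-closed using the $w^*$-to-weak continuity of $S^p_{f_m}$ together with Mazur's theorem, whereas you test against simple functions in $B_{L^{p'}(\mu)}$ and conclude by $L^p$--$L^{p'}$ duality; the two arguments are equivalent.
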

\begin{proof} For each $n\in \Nat$, define the absolutely convex set
$$
	C_n:=\Big\{x^*\in X^*: \, \Big(\int_\Omega|\langle f,x^*\rangle|^p\, d\mu\Big)^{1/p} \leq n\Big\}.
$$
We will prove that {\em $C_n$ is $w^*$-closed}.

To this end, we first use the scalar measurability of~$f$ to find 
an increasing sequence $(E_m)$ of measurable sets with $\Omega=\bigcup_{m\in \Nat} E_m$ 
such that $f_m:=f\chi_{E_m}$ is scalarly bounded for all $m\in \Nat$
(see e.g. \cite[Proposition~3.1]{Musial}). The $\mu$-PIP of~$X$
ensures that each $f_m$ is $p$-Pettis integrable.

Fix $x^*\in \overline{C_n}^{w^*}$ and $m\in \Nat$. By the $p$-Pettis integrability of~$f_m$, the operator $S^p_{f_m}:X^*\to L^p(\mu)$ is
$w^*$-$w$-continuous (just make the obvious changes to the proof of the case $p=1$, see e.g. \cite[Theorem~4.3]{Musial}).
Hence
$$
	S^p_{f_m}(x^*)\in
	\overline{S^p_{f_m}(C_n)}^{w}= 
	\overline{S^p_{f_m}(C_n)}^{\|\cdot\|_{L^p(\mu)}}
$$
(bear in mind that $S^p_{f_m}(C_n)$ is convex). 
Therefore, there is a sequence $(x_k^*)$ in~$C_n$ such that 
$$
	\lim_{k\to \infty}\|\langle f_m,x_k^*\rangle - \langle f_m,x^*\rangle\|_{L^p(\mu)} \to 0.
$$
In particular, $\|\langle f_m,x^*\rangle\|_{L^p(\mu)}\leq n$. As $m\in \Nat$ is arbitrary, an appeal
to the monotone convergence theorem yields
$$
	\int_\Omega |\langle f,x^*\rangle|^p \, d\mu=\lim_{m\to \infty}\int_\Omega |\langle f_m,x^*\rangle|^p \, d\mu\leq n^p,
$$
so that $x^*\in C_n$. This proves that $C_n$ is $w^*$-closed.

Note that $\Gamma=\bigcup_{n\in \Nat}\Gamma\cap C_n$ and $C_n\sub C_{n+1}$ for all $n\in \Nat$. Since $\Gamma$ is $w^*$-thick,
there is $n\in \Nat$ such that $\overline{{\rm aco}}^{w^*}(\Gamma\cap C_n)$ contains a ball centered at~$0$, and so does $C_n$ (because
it is absolutely convex and $w^*$-closed). 
That is, there is $\delta>0$ such that $\delta x^* \in C_n$ for every $x^*\in B_{X^*}$.
This clearly implies that $f$ is $p$-Dunford integrable. The proof is finished. 
\end{proof}

The space $X$ is said to have property~($\mathcal{E}$) (of Efremov) if for every convex bounded set $C \sub X^*$, any element of the $w^*$-closure
of~$C$ is the $w^*$-limit of a sequence contained in~$C$. This class of Banach spaces has been studied in \cite{pli3,pli-yos-2}. It contains all
Banach spaces having $w^*$-angelic dual and, in particular, all weakly compactly generated spaces. Every Banach space
having property~($\mathcal{E})$ also satisfies the so-called Mazur's property and, therefore, has the PIP (see \cite{edgar2}).

For Banach spaces having property~($\mathcal{E}$) the scalar measurability assumption in 
Theorem~\ref{theorem:pDunford-thick} is redundant and we have the following result.

\begin{corollary}\label{corollary:pDunford-thick}
Suppose $X$ has property~($\mathcal{E}$). Let $f:\Omega \to X$
be a function for which there is a $w^*$-thick set $\Gamma\sub X^*$ such that
$Z_{f,\Gamma}\sub L^p(\mu)$. Then $f$ is $p$-Dunford integrable.
\end{corollary}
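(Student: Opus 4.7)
The plan is to reduce the corollary to Theorem~\ref{theorem:pDunford-thick} by showing that, under property~($\mathcal{E}$), the scalar measurability hypothesis is automatic. Recall from the discussion preceding the statement that property~($\mathcal{E}$) implies Mazur's property, which in turn implies the PIP; hence $X$ has the $\mu$-PIP, and once scalar measurability of~$f$ is in hand, Theorem~\ref{theorem:pDunford-thick} delivers $p$-Dunford integrability.

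To prove scalar measurability, I would mimic the $w^*$-closedness argument used in the proof of Theorem~\ref{theorem:pDunford-thick}. For each $n\in \Nat$, put
$$
D_n:=\{x^*\in X^*:\|x^*\|\leq n \text{ and } \langle f,x^*\rangle \text{ is measurable}\}.
$$
Each $D_n$ is absolutely convex and norm-bounded. The first key step is to verify that $D_n$ is $w^*$-sequentially closed: if $(x_k^*)$ is a sequence in~$D_n$ with $w^*$-limit $x^*$, then $\|x^*\|\leq n$ and, pointwise in $\omega\in \Omega$, $\langle f(\omega),x_k^*\rangle\to \langle f(\omega),x^*\rangle$, so $\langle f,x^*\rangle$ is measurable as a pointwise limit of measurable real-valued functions, placing $x^*\in D_n$.

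Now property~($\mathcal{E}$) enters: since $D_n$ is a bounded convex subset of~$X^*$, every element of $\overline{D_n}^{w^*}$ is the $w^*$-limit of a sequence in~$D_n$, and by the previous step such a limit lies in~$D_n$. Hence $D_n$ is $w^*$-closed. Every $x^*\in \Gamma$ has $\langle f,x^*\rangle\in L^p(\mu)$, which is in particular measurable, so $\Gamma=\bigcup_{n\in \Nat}(\Gamma \cap D_n)$ is an increasing union. The $w^*$-thickness of~$\Gamma$ then yields some $n$ for which $\overline{{\rm aco}}^{w^*}(\Gamma \cap D_n)\sub D_n$ contains a ball $\delta B_{X^*}$ centered at~$0$. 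By homogeneity, $\langle f,x^*\rangle$ is measurable for every $x^*\in X^*$, i.e.\ $f$ is scalarly measurable, and Theorem~\ref{theorem:pDunford-thick} completes the proof.

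The main obstacle is really just the $w^*$-closedness of~$D_n$, which is precisely where property~($\mathcal{E}$) is used; without it, a bounded convex $w^*$-sequentially closed set need not be $w^*$-closed, and the argument based on $w^*$-thickness would collapse. The remainder of the reasoning is a direct reprise of the end of the proof of Theorem~\ref{theorem:pDunford-thick}.
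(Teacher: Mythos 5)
Your proof is correct, but it establishes the scalar measurability of $f$ by a genuinely different route than the paper. The paper's proof notes that $w^*$-thickness of $\Gamma$ forces $\overline{{\rm aco}}^{w^*}(\Gamma)$ to contain a ball centered at~$0$, so that $\Gamma$ separates the points of~$X$, and then invokes an external result of Plichko (\cite[Proposition~12]{pli3}): under property~($\mathcal{E}$), a function whose compositions with the members of a separating subset of~$X^*$ are all measurable is automatically scalarly measurable. You instead rerun the thickness machinery from the proof of Theorem~\ref{theorem:pDunford-thick} on the sets $D_n$ of functionals of norm at most~$n$ yielding measurable compositions: these are absolutely convex and $w^*$-sequentially closed (pointwise limits of measurable functions are measurable, and the norm is $w^*$-lower semicontinuous along sequences), hence $w^*$-closed by property~($\mathcal{E}$), and thickness plus homogeneity of measurability under scalar multiples then covers all of~$X^*$. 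Each step checks out. Your version is self-contained and avoids the citation, at the cost of using the full strength of $w^*$-thickness for the measurability step where the paper's route only needs that $\Gamma$ is total; the remaining reduction (property~($\mathcal{E}$) implies Mazur's property, hence the $\mu$-PIP, after which Theorem~\ref{theorem:pDunford-thick} applies) is identical in both arguments.
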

\begin{proof}
Since $\Gamma$ is $w^*$-thick, the set $\overline{{\rm aco}}^{w^*}(\Gamma)$ contains a ball centered at~$0$ and, in particular,
$\Gamma$ separates the points of~$X$. Since $\langle f,x^* \rangle$ is measurable for all $x^*\in \Gamma$ and $X$ has property~($\mathcal{E}$),
we can apply \cite[Proposition~12]{pli3} to conclude that $f$ is scalarly measurable. Theorem~\ref{theorem:pDunford-thick} now 
ensures that $f$ is $p$-Dunford integrable.
\end{proof}
 
\begin{remark}\label{remark:thickPIP}
For $1<p<\infty$, the conclusion of Theorem~\ref{theorem:pDunford-thick} and Corollary~\ref{corollary:pDunford-thick}
can be strengthened to ``$f$ is $p$-Pettis integrable'' (by Corollary~\ref{corollary:PIP}).
\end{remark} 
 
Our last example is based on a construction given in \cite[Example~8]{raj-rod} and shows that 
Corollary~\ref{corollary:pDunford-thick} does not work for arbitrary Banach spaces.
Here $\ell^1([0,1])$ is seen as the dual of~$c_0([0,1])$, so that the set $c_0([0,1]) \sub \ell^1([0,1])^*$ ($=\ell^\infty([0,1])$) 
is $w^*$-thick (see e.g. \cite[Theorem~1.5, (a)$\Leftrightarrow$(c)]{nyg}).

\begin{example}\label{example:NonSeparable}
There is a function $f:[0,1]\to \ell^1([0,1])$ such that: 
\begin{enumerate}
\item[(i)] $\langle f,\varphi\rangle=0$ a.e. for every $\varphi\in c_0([0,1])$;
\item[(ii)] $f$ is not Dunford integrable.
\end{enumerate}
\end{example}
\begin{proof} For each $t\in [0,1]$, let $e_t\in \ell^1([0,1])$ be defined by
$e_t(s):=1$ if $t=s$ and $e_t(s):=0$ if $t\neq s$. Define
$$
	f:[0,1]\to \ell^1([0,1]), \quad
	f(t):=h(t)e_{t},
$$ 
where $h:[0,1]\to \erre$ is any function such that $h\not \in L^1[0,1]$. Condition~(i) holds
because $\langle f,\varphi\rangle$ vanishes outside
of a countable subset of~$[0,1]$. On the other hand, $f$ is not Dunford integrable because
the functional $\chi_{[0,1]} \in \ell^\infty([0,1])$ satisfies $\langle f,\chi_{[0,1]}\rangle=h$.
\end{proof}

%%%%%%%%%%%%%%%%
% Comandos para las referencias

\def\cprime{$'$}\def\cdprime{$''$}
  \def\polhk#1{\setbox0=\hbox{#1}{\ooalign{\hidewidth
  \lower1.5ex\hbox{`}\hidewidth\crcr\unhbox0}}} \def\cprime{$'$}
\providecommand{\bysame}{\leavevmode\hbox to3em{\hrulefill}\thinspace}
\providecommand{\MR}{\relax\ifhmode\unskip\space\fi MR }
% \MRhref is called by the amsart/book/proc definition of \MR.
\providecommand{\MRhref}[2]{%
  \href{http://www.ams.org/mathscinet-getitem?mr=#1}{#2}
}
\providecommand{\href}[2]{#2}

%%%%%%%%%%%%%%%%

\bibliographystyle{amsplain}

\end{document}